\theoremstyle{definition}
\newtheorem{definition}{Definition}
\newtheorem{term}{Term}
\theoremstyle{plain}
\newtheorem{theorem}{Theorem}
\newtheorem{corollary}{Corollary}
\newtheorem{claim}{Claim}
\title{Domination broadcast: A case study on a combination of cycle graph and sunlet graph}
\author{
%{\Large\bf\centerline{VERSION 3}}
\\ \\ 
Sivakorn Sanguanmoo \\ 
Department of Mathematics \\
University of Wisconsin-Madison\\
{\tt sanguanmoo@wisc.edu}\\ \\
}
\date{}
\begin{document}
\maketitle
\begin{abstract}
Domination in graphs has long been studied and is applied to signal distribution problem. For example, telecommunication companies want to spread the signal from broadcast stations by transmitting varying signal strength to all receiving stations. This problem can be interpreted in a term of graph theory. Assume that broadcast companies need to spread the signal in graphs by using broadcast stations with varying signal strength so that the signal could be sent to all the vertices. However, broadcast stations with stronger signal are generally more expensive.  The distribution of signal was configured to cover all stations with minimum total cost of signal called the $\gamma_b-$dominating broadcast number. This paper shows another proof of the $\gamma_b-$dominating broadcast number of cycle graphs and sunlet graphs as a foundation for the further result. I also consider the $\gamma_b-$dominating broadcast number of a generalized version of sunlet graphs whose vertices on the cycle are equally extended by the path, called the sunlet graph with degree $n$. To obtain the optimal cost of the signal distribution for this extended version, we show that it is sufficient to use only one broadcast station at a vertex on the cycle with the signal cost equal to the radius of the sunlet graph with degree $n$.

\end{abstract}
\newpage
\section{Introduction}

Daily life network communication has been found problematic. Distribution network is a part of network communication. However, the distribution of signal is not covering the target areas. Also, the strong signal distributed to cover the target areas is quite expensive. In this study, we examined relation of network by means of graphs. We examined how to construct signal stations that could distribute signal to cover the target areas, where each of which has determined distribution strength with minimum total cost of signal sending. We considered all of the areas as a set of vertices, where two vertices were joined with an edge when the two areas were neighbor. All of vertices and edges were called A graph $G$. All of the signal stations that could satisfy the previous conditions were represented by broadcasts dominating set $S$ so we called the cardinality of set $S$ as $\gamma_b-$ broadcast number of graph $G$ or $\gamma_b(G)$.  
     
     Some specific graphs of this study represented cities in general. For example, a sunlet graph is a graph representing the center and each center connects to another which is like an urban city. Also, pendant vertices represent cities in the countryside. Therefore, studying a function $\gamma_b$ of specific graphs, such as cycle graph $(C_n)$, sunlet graph $(S_n)$, and sunlet$-n—$, is going to be useful for broadcasting signal in the real world.    
     
\section{Term in Graph Theory}  

\begin{term} The \textbf{distance} $d(u,v)$  between vertices $u$ and $v$ of a graph $G$ is the length of a shortest path between $u$ and $v$ \end{term}

\textbf{Example}

\begin{figure} [htbp]
\centering
\includegraphics[width=7cm]{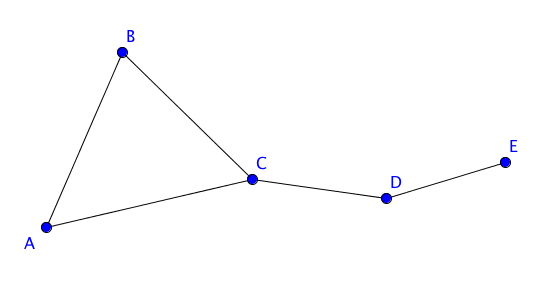}
\caption{The example of graph $G$}
\end{figure}

According to \textbf{Term 1}, $d(A,B) =1, d(A,D) =2$ and $d(B,E) =3$

\begin{term} \textbf{Eccentricity of a vertex v or $e(v)$} is the maximum distance of d(u,v) among all vertices $u$ in a graph G. Formally, $e(v) = max\{d(u,v)|u \in V(G)\}$. \end{term}

\begin{term}\textbf{Radius of a graph $G$ or $rad(G)$} is the minimum eccentricity value of a vertex $v$ among all vertices $v$ in a graph $G$. Formally, $rad(G) = min\{e(v)|v \in V(G)\}$.\end{term}

\begin{term}\textbf{Diameter of a graph $G$ or $diam(G)$} is maximum eccentricity value of a vertex $v$ among all vertices $v$ in a graph $G$. Formally, $diam(G) = max\{e(v)|v \in V(G)\}$. \end{term}

From \textbf{Figure 1}, $e(A)=max\{d(A,B),d(A,C),d(A,D),d(A,E)\}=3$. Similarly, $e(B)=3,e(C)=2,e(D)=2$, and $e(E)=3$, so $rad(G)=2$ and $diam(G)=3$ 

\section{Definition}

\subsection{Variations graph of cycle graph and sunlet graph}

\begin{definition} \textbf{The path} or \textbf{$P_n$} is a graph which consists of $n$ vertices lying on a straight line where $n$ is a natural number. Formally, $V(P_n)=\{v_1,v_2,...,v_n\},E(P_n)=\{v_iv_{i+1}|i\in \{1,2,...,n-1\}\}$ \end{definition}

\begin{figure} [htbp]
\centering
\includegraphics[width=8cm]{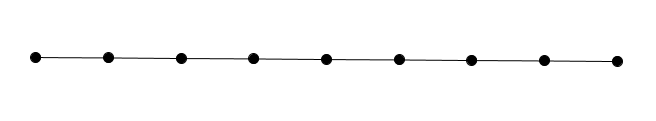}
\caption{graph $P_9$}
\end{figure}

\begin{definition} \textbf{The cycle} or \textbf{$C_n$} is a graph which consists of $n$ vertices, $v_1,v_2,...,v_n$, and edges, $v_1v_2, v_2v_3,...,v_{n-1}v_n,v_nv_1$ \end{definition}

\begin{figure} [htbp]
\centering
\includegraphics[width=8cm]{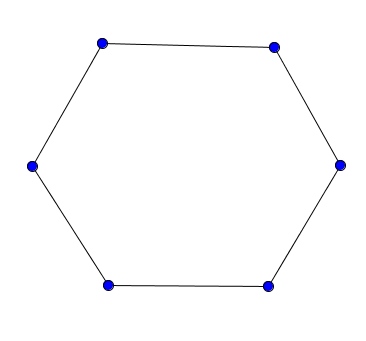}
\caption{graph $C_6$}
\end{figure}

\begin{definition} \textbf{The sunlet} or \textbf{$S_n$} is a graph on vertices obtained by attaching pendant edges to a cycle graph $C_n$. \end{definition}

\begin{figure} [htbp]
\centering
\includegraphics[width=7cm]{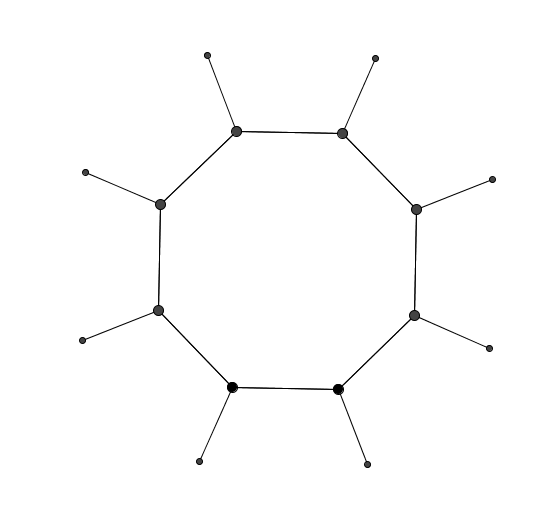}
\caption{graph $S_8$}
\end{figure}

\begin{definition}[\cite{anitha2008n}] \textbf{$m-$ Sunlet degree \textbf{$n$} or $S_m^n$} is a graph which consists of $C_m$ and straight path length $n$  $(P_n)$ extended from all vertices of $C_m$ of where $m,n$ are positive integers such that $m\geq 3, n \geq 1$. We call each vertex of $C_m$ as a \textbf{base vertex}, each $P_n$ as a \textbf{branch}, vertices on $P_n$ as \textbf{pendant vertices}, and an outmost pendant vertex as a \textbf{leaf vertex}.  \end{definition}  

\begin{figure} [htbp]
\centering
\includegraphics[width=6cm]{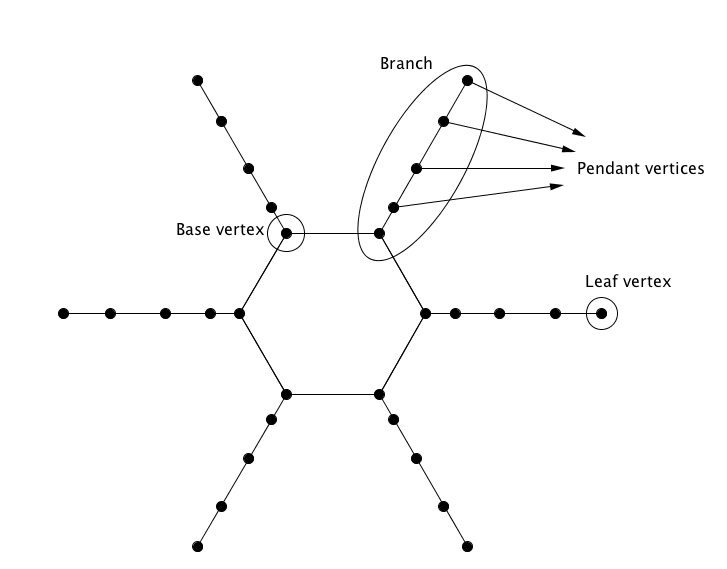}
\caption{graph $S_6^4$}
\end{figure}

\subsection{Dominating Broadcast \cite{haynes1998fundamentals},\cite{haynes1998domination}}

\begin{definition} \textbf{A broadcast vertex $v$} with \textbf{strength} $m$ is a vertex which can send a \textbf{signal} to vertices in the graph such that a vertex $u$ can receive a signal from $v$ if and only if $d(u,v)\leq m$, note that $u$ can send a signal to itself where $m$ is a positive integer. \end{definition}

\begin{figure} [htbp]
\centering
\includegraphics[width=10cm]{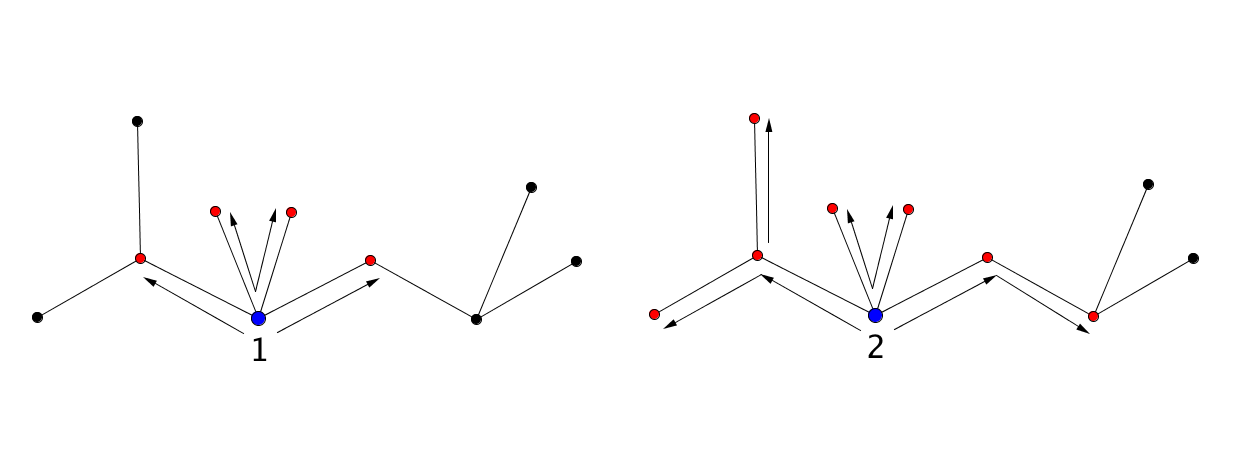}
\caption{graph H with a broadcast vertex (the blue vertex) with signal strength (the number beside the blue vertex) and vertices which can receive a signal from the broadcast vertex (red vertices)}
\end{figure}

\begin{definition} \textbf{Dominating broadcast function $f$} is a function from set of vertices to set of positive integers including $0$ such that if a function $f$ value of a vertex $u$ is $0$, $u$ is not a broadcast vertex. On the other hand, if a function $f$ value of the vertex $u$ is $m \ne 0$, $u$ is a broadcast vertex with strength $m$. The condition of this function is that all vertices of the graph have to receive signals from at least one broadcast vertex. 
\end{definition}

\begin{definition} \textbf{The cost of function $f$} is the total of signal strengths of broadcast vertices in function $f$.
\end{definition}

\begin{figure} [htbp]
\centering
\includegraphics[width=8cm]{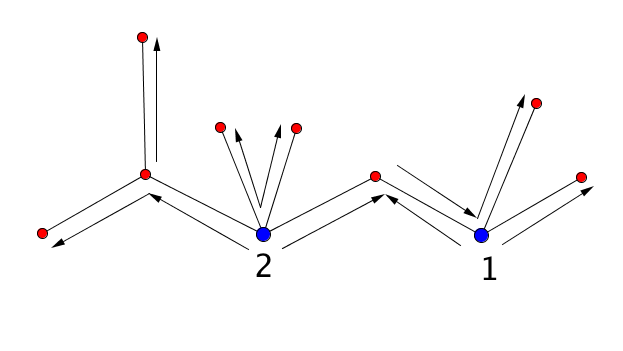}
\caption{graph $H$ with the dominating broadcast function (two blue vertices are broadcast vertices and a number beside each broadcast vertex is a signal strength of that vertex). }
\end{figure}

\begin{definition}
\textbf{$\gamma_b-$ dominating broadcast function of graph $G$} is a dominating broadcast function of $G$ which has the minimum cost. We call this minimum cost as\textbf{ $\gamma_b$ of a graph $G$ }or \textbf{$\gamma_b(G)$}
\end{definition}

As we see in \textbf{Figure 7}, those two vertices with those signal strengths have the minimum cost of dominating broadcast functions, so $\gamma_b(H)=3$. 

\begin{definition} \textbf{The efficient broadcast} is the dominating broadcast such that each vertex can receive a signal from only one broadcast vertex.
\end{definition}

As we see in \textbf{Figure 7}, this is not the efficient broadcast because the vertex between two broadcast vertices can receive signals from both broadcast vertices.

\section{Corollary}
These following corollaries are essential corollaries for finding $\gamma_b-$dominating broadcast function value of the cycle, the Sunlet, and the Sunlet degree $n$.

\begin{corollary}[\cite{erwin2001cost}] Every graph $G$ has a $\gamma_b-$ broadcast which is efficient.\end{corollary}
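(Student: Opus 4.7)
The plan is to take, among all $\gamma_b$-broadcasts of $G$, one whose support $\{v:f(v)>0\}$ has minimum cardinality, and show that such an $f$ must already be efficient. Suppose for contradiction that some vertex $w$ receives a signal from two distinct broadcast vertices $u_1,u_2$. Setting $d:=d(u_1,u_2)$, the triangle inequality through $w$ gives $d\le f(u_1)+f(u_2)$, so we are in one of two situations: (i) $d\le f(u_i)$ for some $i$, or (ii) $f(u_1),f(u_2)<d\le f(u_1)+f(u_2)$.

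In case (i), say $d\le f(u_1)$, I would absorb $u_2$ into $u_1$: set $f'(u_1):=\max(f(u_1),\,d+f(u_2))$, $f'(u_2):=0$, and leave $f$ unchanged elsewhere. The triangle inequality then shows every vertex formerly covered by $u_2$ is within distance $f'(u_1)$ of $u_1$, so $f'$ remains dominating, and the cost change equals $\max(0,\,d-f(u_1))-f(u_2)\le 0$. In case (ii), I would merge $u_1$ and $u_2$ into a single broadcast at a vertex $v$ on a shortest path from $u_1$ to $u_2$: place $v$ at distance $a:=\lfloor(d+f(u_2)-f(u_1))/2\rfloor$ from $u_1$ (the case-(ii) hypotheses guarantee $0\le a\le d$) and assign it strength $r:=\lceil(d+f(u_1)+f(u_2))/2\rceil$. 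Two quick triangle-inequality checks establish $a+f(u_1)\le r$ and $(d-a)+f(u_2)\le r$, so every vertex previously covered by $u_1$ or $u_2$ is within distance $r$ of $v$, while $r-f(u_1)-f(u_2)=-\lfloor(f(u_1)+f(u_2)-d)/2\rfloor\le 0$.

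In either case the new function $f'$ is a dominating broadcast with cost at most $\gamma_b(G)$ and support strictly smaller than that of $f$. This yields the desired contradiction: a strict cost decrease violates $\gamma_b$-optimality of $f$, while an equal cost violates the minimality of $|\mathrm{supp}(f)|$. The main obstacle lies in case (ii): one must verify that the chosen midpoint $v$ is actually a vertex on some shortest path from $u_1$ to $u_2$ (which uses both hypotheses $f(u_1),f(u_2)<d$), that a ball of radius $r$ around $v$ really contains both original broadcast balls, and that the arithmetic still produces no cost increase in the tight cases $d\in\{f(u_1)+f(u_2)-1,\,f(u_1)+f(u_2)\}$ where the strict-cost argument degenerates and one must invoke support-minimality instead.
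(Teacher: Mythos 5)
The paper does not actually prove this corollary: it explicitly defers to Erwin's thesis (``We will not show the proof but the example''), so your proposal is not competing with an in-paper argument but supplying one, and the double-extremal argument you give is correct and makes the result self-contained. Choosing a $\gamma_b$-broadcast of minimum support and then merging any two broadcast vertices whose balls overlap is exactly the right mechanism: your case (i) absorption has cost change $\max(-f(u_2),\,d-f(u_1))\le 0$, and in case (ii) the checks go through --- $a+f(u_1)=\lfloor(d+f(u_1)+f(u_2))/2\rfloor\le r$, $(d-a)+f(u_2)=r$, and $1\le a\le d-1$ follows from $f(u_1),f(u_2)<d$, so $v$ exists on the shortest path and is distinct from $u_1,u_2$ --- so in every case the cost does not increase while the support strictly shrinks, and either a strict cost drop contradicts optimality or equal cost contradicts support-minimality. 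Two small points are worth writing out explicitly: if the merge point $v$ already carries positive strength you should set $f'(v)=\max(f(v),r)$, which still gives nonincreasing cost and strictly smaller support; and your constructions may assign a vertex a strength exceeding its eccentricity, which is harmless under this paper's Definition~6 (no cap on strengths) but would need the easy fix of truncating to $e(v)$ under the standard definition of a broadcast used in the cited literature. What your route buys over the paper's citation is a short, elementary, fully general proof; what the paper's choice buys is brevity and attribution to the original source.
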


This corollary is proved by D.J. Erwin. We will not show the proof but the example for this corollary.

\begin{figure} [htbp]
\centering
\includegraphics[width=6.5cm]{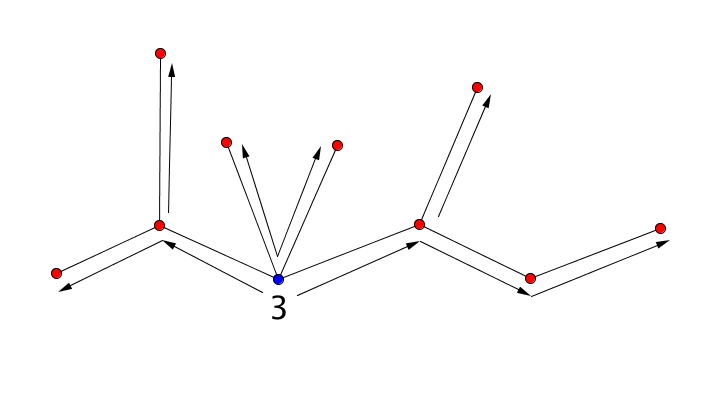}
\caption{The $\gamma_b-$ dominating broadcast function of the graph $H$  which is efficient}
\end{figure}

From \textbf{Figure 7}, $\gamma_b(H)=3$. Therefore, the dominating broadcast function in \textbf{Figure 8} is the $\gamma_b-$ dominating broadcast function. Consider that every vertex can receive a signal from only the blue vertex. Therefore, this $\gamma_b-$ dominating broadcast function is efficient.

Consider \textbf{Figure 8} that the efficient broadcast function which is also the $\gamma_b-$ dominating broadcast function uses only one broadcast vertex. However, it does not mean that every graph can use only one broadcast vertex for the $\gamma_b-$ dominating broadcast function.  

\begin{figure} [htbp]
\centering
\includegraphics[width=6.5cm]{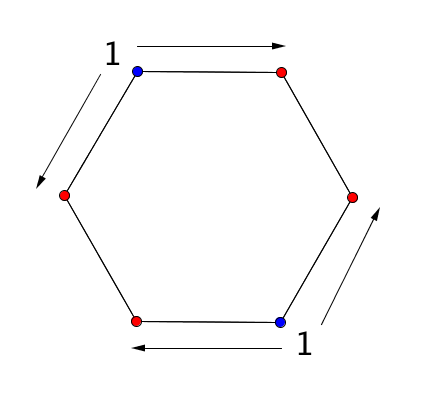}
\caption{The $\gamma_b-$ dominating broadcast function of the graph $C_6$  which is efficient}
\end{figure}

For $C_6$, $\gamma_b(C_6)=2$. If we let a signal strength to any vertex for $2$, it cannot send a signal to all vertices in a graph. In this case, the $\gamma_b-$ dominating broadcast function occurs by using at least two vertices. The $\gamma_b-$ dominating broadcast function in \textbf{Figure 9} is efficient.

\begin{corollary} $rad(S_m^n)=\lfloor \frac{m}{2}\rfloor+n$\end{corollary}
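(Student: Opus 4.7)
The plan is to label the vertices systematically, compute distances, and identify eccentricities by cases (base vs.\ pendant). Write the base vertices on the cycle as $v_1, v_2, \dots, v_m$ (indices mod $m$) and for each $i$ write the branch attached to $v_i$ as the path $v_i = v_i^{(0)}, v_i^{(1)}, \dots, v_i^{(n)}$, where $v_i^{(n)}$ is the leaf. Every vertex of $S_m^n$ falls into exactly one of these two categories, so it suffices to compute the eccentricity of a generic base vertex and of a generic pendant vertex.

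First I would handle base vertices. Any shortest path between two base vertices $v_i$ and $v_j$ stays on the cycle, so $d(v_i,v_j)=\min(|i-j|,\,m-|i-j|)\le \lfloor m/2\rfloor$, with equality attained for some $j$. For a pendant $v_j^{(k)}$ with $1\le k\le n$, the unique shortest path from $v_i$ must enter the branch through $v_j$, giving $d(v_i,v_j^{(k)})=d(v_i,v_j)+k$. Maximizing over $j$ and $k$ yields $e(v_i)=\lfloor m/2\rfloor+n$, realized at a leaf on a diametrically opposite branch.

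Next I would handle pendant vertices. For $v_i^{(k)}$ with $k\ge 1$, any shortest path to a vertex outside its own branch must pass through $v_i$, so $d(v_i^{(k)},u)=k+d(v_i,u)$ for such $u$. Choosing $u=v_j^{(n)}$ with $j$ farthest from $i$ on the cycle gives $e(v_i^{(k)})\ge k+\lfloor m/2\rfloor+n>\lfloor m/2\rfloor+n$. (Distances to vertices on the same branch are easily seen to be smaller, so they do not affect the eccentricity.) Thus every pendant vertex has eccentricity strictly greater than that of every base vertex, and taking the minimum over all vertices gives $rad(S_m^n)=\lfloor m/2\rfloor+n$.

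I do not expect a serious obstacle: the graph has enough structure that all shortest paths are forced (branches are trees, the cycle is traversed the short way). The only minor subtlety is confirming that no shortcut exists between two pendant vertices on different branches, which is immediate from the fact that removing the cycle disconnects the branches from one another; this guarantees the decomposition $d(v_i^{(k)},v_j^{(\ell)})=k+d(v_i,v_j)+\ell$ used implicitly above.
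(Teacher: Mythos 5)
Your proposal is correct and follows essentially the same route as the paper: show that pendant vertices have strictly larger eccentricity than their base vertices (so the radius is attained at a base vertex), then compute the eccentricity of a base vertex as the distance to a leaf on a diametrically opposite branch, namely $\lfloor \frac{m}{2}\rfloor + n$. Your version is somewhat more explicit in its labeling and direct computation, whereas the paper phrases the first step as a contradiction, but the underlying argument is the same.
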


\begin{proof} From \textbf{Term 2}, $rad(S_m^n) = min\{e(v)|v \in V(G)\}$. We need to find a vertex $v$ which has the minimum eccentricity value. We will prove that $v$ needs to be the base vertex. For the sake of contradiction, assume that $v$ is the pendant vertex with its base vertex $u$. Consider the vertex $w$ which is not on the branch consisting the vertex $v$. It is obvious that the shortest path between $v$ and $w$ has to pass $u$. Therefore,
\begin{align}
d(v,w)=d(v,u)+d(u,w)> d(u,w) \nonumber
\end{align}

\begin{figure} [htbp]
\centering
\includegraphics[width=6.5cm]{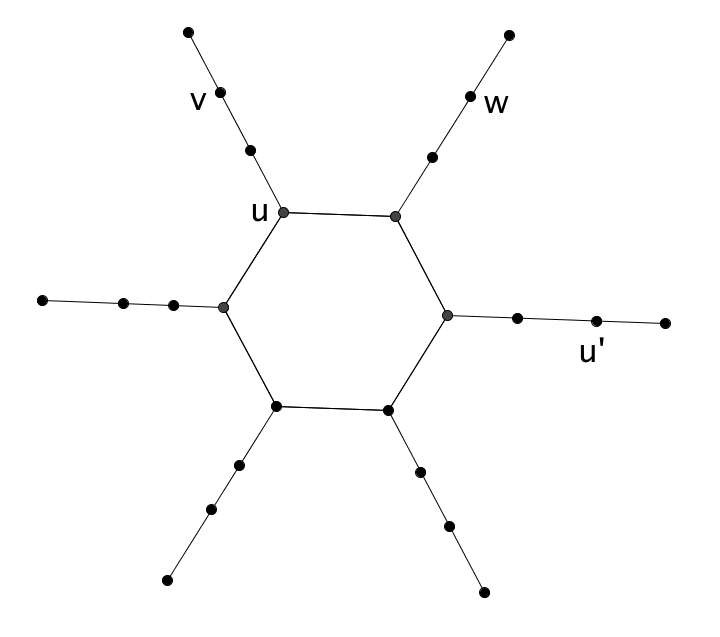}
\caption{The graph $S_6^3$ with vertices $u,v,w,$ and $u'$}
\end{figure}

Let $e(u)=d(u,u')$ where $u'$ is the vertex of the graph $G$. Consider that $u'$ and $u$ cannot be on the same branch; otherwise, $d(u,u')\leq n$, which is not true because the distance between $u$ and a leaf vertex on the different branch is more than $n$. Therefore,   
\begin{align}
e(u)=d(u,u')<d(v,u')\leq e(v) \nonumber
\end{align}
, which contradicts with the minimality of $e(v)$ as desired.

We can conclude that $v$ has to be a base vertex, and the farthest vertex from $v$ is the leaf vertex $t$ on the branch completely opposite to the vertex $v$. Therefore, 
\begin{align}
rad(S_m^n)=e(v)=d(v,t)=\lfloor \frac{m}{2}\rfloor+n \nonumber
\end{align} \end{proof}
\begin{figure} [htbp]
\centering
\includegraphics[width=6.5cm]{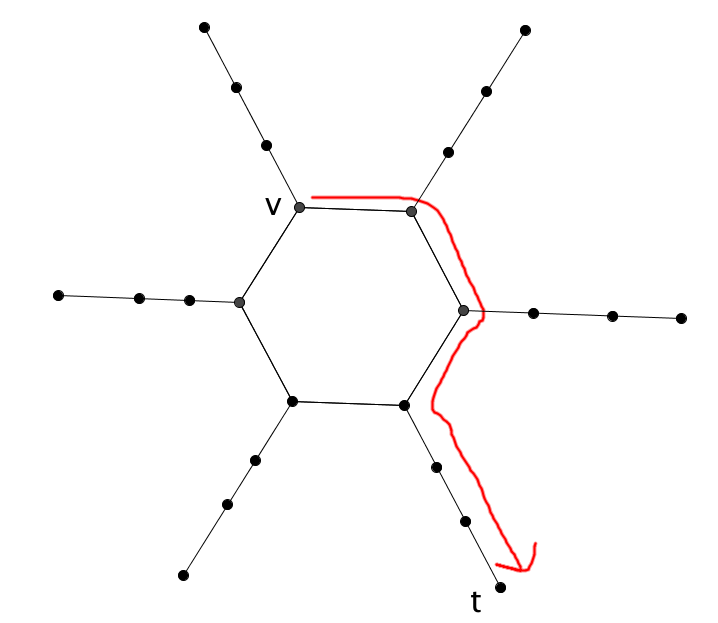}
\caption{The shortest path between $v$ and $t$ of the graph $S_6^3$}
\end{figure}

\begin{corollary} For every graph $G$, $\gamma_b(G) \leq rad(G) $ \end{corollary}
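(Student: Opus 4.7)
The plan is to exhibit an explicit dominating broadcast function whose cost equals $rad(G)$, which immediately gives the desired inequality since $\gamma_b(G)$ is the minimum cost among all such functions.

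First I would invoke the definition of radius. Since $rad(G) = \min\{e(v) \mid v \in V(G)\}$, there must exist at least one vertex $v^* \in V(G)$ (a center of $G$) realizing this minimum, i.e., $e(v^*) = rad(G)$. Unpacking the definition of eccentricity, this means $d(v^*, u) \leq rad(G)$ for every $u \in V(G)$.

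Next I would define the candidate broadcast function $f : V(G) \to \mathbb{Z}_{\geq 0}$ by setting $f(v^*) = rad(G)$ and $f(u) = 0$ for every $u \neq v^*$. To check that $f$ is a dominating broadcast function, pick any $u \in V(G)$; by the eccentricity bound above, $d(u, v^*) \leq rad(G) = f(v^*)$, so by Definition 5 the vertex $u$ receives a signal from the broadcast vertex $v^*$. Hence every vertex is dominated by $f$, so $f$ satisfies Definition 6. The cost of $f$ is simply $f(v^*) = rad(G)$ since all other vertices contribute $0$.

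Finally, since $\gamma_b(G)$ is defined as the minimum cost over all dominating broadcast functions, and we have produced one with cost $rad(G)$, we conclude $\gamma_b(G) \leq rad(G)$. There is no real obstacle here — the argument is a one-shot construction, and the only subtlety is remembering that $rad(G)$ is guaranteed to be attained by some vertex (which follows from $V(G)$ being finite), so that the broadcast can actually be placed at a center.
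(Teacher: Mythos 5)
Your proposal is correct and follows exactly the same approach as the paper: place a single broadcast vertex at a center of $G$ with strength $rad(G)$, observe that it dominates every vertex by the definition of eccentricity, and conclude by minimality of $\gamma_b(G)$. Your write-up is in fact slightly more careful than the paper's, explicitly noting that the minimum is attained because $V(G)$ is finite.
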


\begin{proof} From \textbf{Term 2}, $rad(G) = min\{e(v)|v \in V(G)\}$. Therefore, there is a vertex $v'$ such that $e(v')=rad(G)$, which means that $d(v',u) \leq rad(G)$ for every vertex $v$. We can use $v'$ as a broadcast vertex with a strength $rad(G)$ to send a signal to all vertices in the graph. Thus, $\gamma_b(G)\leq rad(G)$ as desired.
\end{proof}

\begin{corollary}[\cite{dunbar2006broadcasts}] For every natural number $n$, $\gamma_b(P_n) = \lceil \frac{n}{3} \rceil $ \end{corollary}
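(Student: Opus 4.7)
The plan is to establish $\gamma_b(P_n) = \lceil n/3 \rceil$ by proving the two matching inequalities separately, as is standard for exact determinations of broadcast numbers.

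For the upper bound $\gamma_b(P_n) \leq \lceil n/3 \rceil$, I would exhibit an explicit dominating broadcast of cost $\lceil n/3 \rceil$. Place broadcast vertices of strength $1$ at the positions $v_2, v_5, v_8, \ldots$, spacing them three apart along the path. Each such broadcast of strength $1$ covers the three consecutive vertices centered at its location, and these cover sets are disjoint and together exhaust $V(P_n)$. Thus $\lceil n/3 \rceil$ broadcast vertices, each of strength $1$, suffice. One only needs to check that the residues $n \bmod 3 \in \{1,2\}$ can be handled by shifting the final broadcast vertex slightly toward the right endpoint without increasing the total strength.

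For the lower bound, I would invoke Corollary 1 to obtain an efficient $\gamma_b$-broadcast $f$ on $P_n$ with $k$ broadcast vertices of positive strengths $m_1, \dots, m_k$. Efficiency means the closed balls $B(v_{i_j}, m_j) = \{u \in V(P_n) : d(u, v_{i_j}) \leq m_j\}$ partition $V(P_n)$. Because $P_n$ is a path, each such ball contains at most $2m_j + 1$ vertices, so
\[
n \;=\; \sum_{j=1}^{k} \bigl|B(v_{i_j}, m_j)\bigr| \;\leq\; \sum_{j=1}^{k} (2m_j + 1) \;=\; 2\,\gamma_b(P_n) + k.
\]
Since each $m_j \geq 1$, we also have $k \leq \sum_j m_j = \gamma_b(P_n)$. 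Combining these two estimates yields $n \leq 3\,\gamma_b(P_n)$, and since $\gamma_b(P_n)$ is an integer we obtain $\gamma_b(P_n) \geq \lceil n/3 \rceil$, completing the proof.

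The main obstacle is the lower bound: without an efficient broadcast in hand, the balls of a general $\gamma_b$-broadcast may overlap and one cannot simply sum ball sizes against $n$. Corollary 1 removes this difficulty entirely by letting us assume efficiency; the short counting argument above then does the rest. A minor bookkeeping issue arises in the upper-bound construction when $n \not\equiv 0 \pmod 3$, where one must verify that shifting the last broadcast vertex inward still covers the right endpoint of the path.
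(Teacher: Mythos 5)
Your proof is correct, but it is worth noting that the paper itself offers no proof of this corollary: it explicitly defers to the cited reference and only records the ``standard pattern,'' which is an explicit construction establishing the upper bound $\gamma_b(P_n)\leq\lceil n/3\rceil$. Your upper-bound construction is essentially that same pattern (strength-$1$ broadcasts at every third vertex, with a small adjustment for $n\not\equiv 0\pmod 3$). What you add is the lower bound, and your counting argument is sound; it closely parallels the paper's own proof of Theorem~\ref{1} for cycles, where the author sums $2q_l+1$ over broadcast vertices and combines with $k\leq\sum q_l$ to get $\sum q_l\geq\lceil n/3\rceil$. One small remark: your appeal to Corollary~1 is unnecessary, and your stated reason for needing it is not right. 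For any dominating broadcast the balls cover $V(P_n)$, so $n\leq\sum_j\lvert B(v_{i_j},m_j)\rvert\leq\sum_j(2m_j+1)$ holds regardless of overlap---overlap only inflates the right-hand side, which is the harmless direction. Efficiency would matter if you needed equality $n=\sum_j\lvert B\rvert$, but your argument only uses the inequality, so the proof goes through for an arbitrary minimum-cost dominating broadcast. With that simplification your argument is a clean, self-contained proof of a statement the paper leaves to the literature.
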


We will not show the proof, but we will introduce \textbf{the standard pattern of ${\gamma}_{b}-$dominating broadcast function of $P_n$}

\begin{definition} \textbf{the standard pattern of ${\gamma}_{b}-$dominating broadcast functions of $P_n$} is the ${\gamma}_{b}-$dominating broadcast function of $P_n$ with vertices $v_1,v_2,...,v_n$ with following conditions
\begin{itemize}
\item if $n=1$, then let $P_1$ be a broadcast vertex with a strength $1$.
\item if $n=2$, then let either $P_1$ or $P_2$ be a broadcast vertex a strength $1$.
\item if $n=3k$ where $k$ is a natural number, then let $P
_2,P_5,...,P_{3k-1}$ be broadcast vertices with a strength $1$
\item if $n=3k+1$ where $k$ is a natural number, then let either $P_1,P_3,P_6,...,P_{3k}$ or $P_2,P_4,P_7,...,P_{3k+1}$ be broadcast vertices with a strength $1$.
\item if $n=3k+2$ where $k$ is a natural number, then let $P_2$ be a broadcast vertex with a strength $2$, and let $P_7,P_{10},...,P_{3k+1}$ be broadcast vertices with a strength $1$.
\end{itemize}
\end{definition}

It is easy to see that the cost of those standard patterns function is $\lceil \frac{n}{3} \rceil$. Thus, these functions are the ${\gamma}_{b}-$dominating broadcast function of $P_n$.

\begin{figure} [htbp]
\centering
\includegraphics[width=15cm]{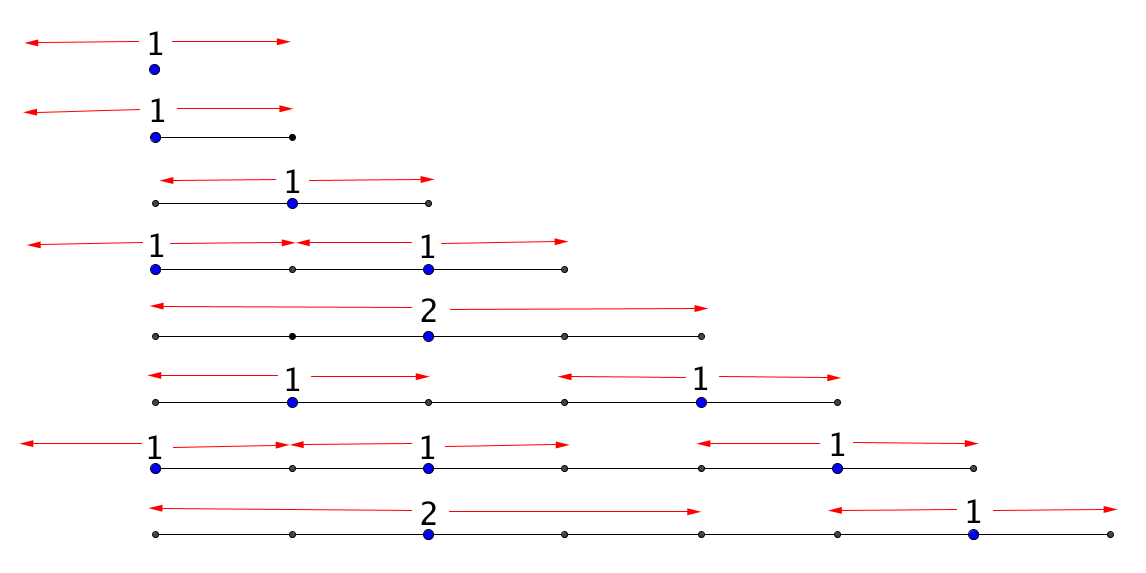}
\caption{the standard pattern of ${\gamma}_{b}-$dominating broadcast function of $P_1,P_2,...,P_8$}
\end{figure}

\textbf{The significant property} of the standard pattern of ${\gamma}_{b}-$dominating broadcast functions of $P_n$ is that if we add "phantom vertices" on both sides of $P_n$ and "phantom edges" to be the infinite path, those broadcast vertices can send a signal to "phantom vertices" at most one vertex for each side.

\begin{corollary} Let $G$ be a graph consisting of vertices $A$ and $B$, but $A$ is not adjacent to $B$. Let $G'$ be the same graph with $G$ but an external side $AB$. Then, ${\gamma}_{b}(G')\leq{\gamma}_{b}(G)$.
\end{corollary}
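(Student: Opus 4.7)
The plan is to show that any $\gamma_b$-dominating broadcast function of $G$ remains a valid dominating broadcast function of $G'$ at the same cost. Since the cost of this function in $G'$ is an upper bound on $\gamma_b(G')$, the inequality follows immediately.

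First I would choose a $\gamma_b$-dominating broadcast function $f$ of $G$, so $\mathrm{cost}(f) = \gamma_b(G)$. Let $B_f = \{u \in V(G) : f(u) \geq 1\}$ denote the set of broadcast vertices. By the definition of a dominating broadcast function, for every vertex $w \in V(G)$ there exists some $u \in B_f$ with $d_G(u, w) \leq f(u)$.

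Next, I would establish the core distance-monotonicity observation: for every pair of vertices $x, y$ in $V(G) = V(G')$, we have $d_{G'}(x,y) \leq d_G(x,y)$. This is because every path in $G$ is still a path in $G'$ (we only added an edge, not removed any), so any shortest $x$--$y$ walk in $G$ is available in $G'$, and possibly a shorter one has become available via the new edge $AB$.

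Combining the two observations, for any $w \in V(G')$ and the dominator $u \in B_f$ that covered $w$ in $G$, we get
\begin{align}
d_{G'}(u,w) \leq d_G(u,w) \leq f(u), \nonumber
\end{align}
so $u$ still dominates $w$ in $G'$ at the same strength. Hence $f$ is a dominating broadcast function of $G'$ with cost $\gamma_b(G)$, and therefore $\gamma_b(G') \leq \mathrm{cost}(f) = \gamma_b(G)$, as desired.

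There is no serious obstacle here; the only conceptual point is the monotonicity of distances under edge addition, and the rest is a direct transfer of the broadcast function from $G$ to $G'$. The hypothesis that $A$ and $B$ are non-adjacent in $G$ is used only to ensure that $G' \neq G$ and the statement is non-trivial; the argument itself does not rely on it.
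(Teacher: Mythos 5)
Your proposal is correct and follows the same route as the paper: transfer the optimal broadcast function of $G$ to $G'$ and use the fact that adding an edge can only decrease distances, so every vertex remains dominated at the same cost. The paper states this in one sentence as obvious; you have simply made the distance-monotonicity step explicit.
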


\begin{proof} This corollary is obvious because the ${\gamma}_{b}-$dominating broadcast function of $G$ is a dominating broadcast function of $G'$ because adding edges does not interrupt signal sending.
\end{proof}
\section{Main theorem}
The main purpose of this research is to find $\gamma_b-$dominating broadcast function value of $C_n$ and $S^n_m$. 
\begin{theorem} 
\label{1}
$\gamma_b(C_n)=\lceil \frac{n}{3}\rceil$ for each natural number $n\geq 3$ \end{theorem}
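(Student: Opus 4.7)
The plan is to split the argument into the standard two inequalities, using the infrastructure already built up in the paper.

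For the upper bound $\gamma_b(C_n)\le\lceil n/3\rceil$, I would appeal to Corollary 4 together with Corollary 3 (for paths). Observe that $C_n$ is obtained from $P_n$ by adding the single edge joining the two endpoints, which are non-adjacent in $P_n$. Corollary 4 then gives
\begin{equation*}
\gamma_b(C_n)\le\gamma_b(P_n)=\Bigl\lceil\frac{n}{3}\Bigr\rceil.
\end{equation*}
Alternatively, one can directly exhibit the standard pattern of dominating broadcast used for $P_n$ on the cycle, and the ``phantom vertices'' property mentioned earlier shows why cycling it up costs nothing extra.

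For the lower bound $\gamma_b(C_n)\ge\lceil n/3\rceil$, I would invoke Corollary 1 to fix an efficient $\gamma_b$-broadcast $f$ of $C_n$, with broadcast vertices $v_1,\ldots,v_k$ carrying strengths $s_1,\ldots,s_k\ge 1$. The key idea is that in $C_n$, for any strength $s$ with $2s+1\le n$, the ball of radius $s$ around a vertex contains exactly $2s+1$ vertices. I would then split into two cases.

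Case 1, $k=1$: the lone broadcast vertex must reach every vertex, so $2s_1+1\ge n$, giving $s_1\ge\lceil (n-1)/2\rceil$. For $n\ge 3$ one checks $\lceil (n-1)/2\rceil\ge\lceil n/3\rceil$, so the cost is at least $\lceil n/3\rceil$.

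Case 2, $k\ge 2$: efficiency forces the closed balls $B(v_i,s_i)$ to partition $V(C_n)$, and in particular each $s_i$ satisfies $2s_i+1\le n$. Summing sizes,
\begin{equation*}
\sum_{i=1}^{k}(2s_i+1)=n,\qquad\text{so the cost is }\sum_i s_i=\frac{n-k}{2}.
\end{equation*}
Since each $s_i\ge 1$ the corresponding ball contains at least $3$ vertices, so $k\le n/3$. Therefore the cost is at least $(n-n/3)/2=n/3$, and being an integer it is at least $\lceil n/3\rceil$.

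Combining the two bounds yields the theorem. The main subtlety I expect is the bookkeeping in Case 2: one must be careful to note that $k\equiv n\pmod 2$ forces $(n-k)/2$ to be an integer, and that the disjoint-ball count really does give equality (not just an inequality) by efficiency, which is exactly what pins the lower bound to $\lceil n/3\rceil$ rather than something weaker.
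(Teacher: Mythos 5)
Your proposal is correct, but it takes a genuinely different route from the paper in the upper bound and a mild variant in the lower bound. For the upper bound the paper exhibits explicit broadcast functions on $C_n$ in three cases according to $n \bmod 3$; you instead deduce $\gamma_b(C_n)\le\gamma_b(P_n)=\lceil n/3\rceil$ from the edge-addition monotonicity (this is the paper's Corollary 5, and the path formula is its Corollary 4 --- your numbering is shifted by one) applied to the edge joining the two endpoints of $P_n$. This is shorter and cleaner, at the price of resting entirely on the path formula, which the paper only cites without proof, whereas the paper's explicit constructions are self-contained. For the lower bound both arguments count the $2s+1$ vertices reachable from a broadcast vertex of strength $s$, but the paper does not invoke efficiency: it takes an arbitrary dominating broadcast, derives the covering inequality $\sum_l(2q_l+1)\ge n$ together with $q_l\ge 1$, and concludes via $\max\{\tfrac{n-k}{2},k\}\ge \lceil n/3\rceil$ by a weighted-average argument. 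You instead fix an efficient broadcast (Corollary 1) so that the balls partition $V(C_n)$, turning the inequality into the equality $\sum_i(2s_i+1)=n$, and then bound $k\le n/3$; this forces you to treat $k=1$ separately, since a single ball can wrap around the cycle and then contains fewer than $2s_1+1$ distinct vertices --- a case the paper's covering inequality absorbs automatically. Both computations are sound; the paper's lower bound is marginally more economical (no efficiency, no case split), while yours makes the exact partition and the parity of $n-k$ transparent.
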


\begin{figure} [htbp]
\centering
\includegraphics[width=6cm]{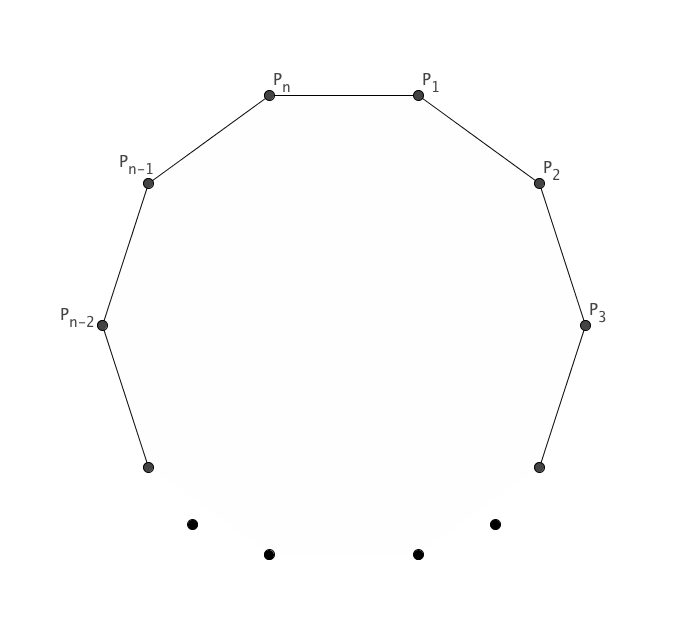}
\caption{the cycle graph $C_n$}
\end{figure}

\begin{proof}
Consider any dominating broadcast function $x$ of $C_n$. Let broadcast vertices for the function $x$ are $P_{x_1},P_{x_2},...,P_{x_k}$ where $k$ is the number of the broadcast vertices, and $x_1,x_2,...,x_k$ are natural number in an ascending order. Let the strength of each $P_{x_l}$ be $q_l$ for each natural number $l \in \{1,2,..,k\}$.

\begin{figure} [htbp]
\centering
\includegraphics[width=8cm]{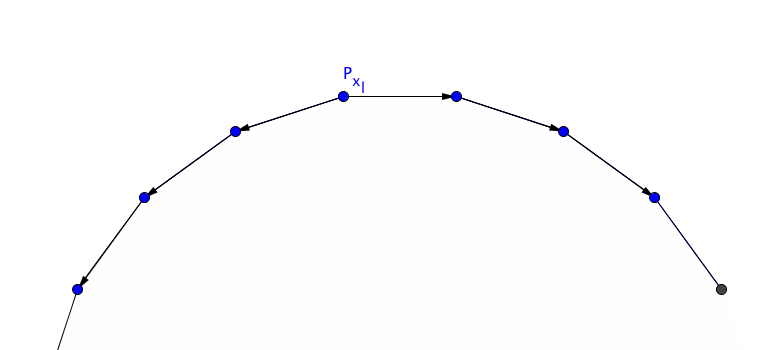}
\caption{Signals from $P_{x_l}$ (blue vectors)}
\end{figure}

Consider that $P_{x_l}$ can send a signal to its clockwise direction for $q_l$ vertices. Also, it can send a signal to its anticlockwise direction for $q_l$ vertices. Therefore, $P_{x_l}$ can send a signal to $q_l+q_l+1=2q_l+1$ vertices (including $P_{x_l}$ itself). 

Because $x$ is a dominating broadcast function, every vertex can receive signal at least once. Therefore,

\begin{align}
\sum_{l=1}^k {(2q_l+1)}  \geq n \nonumber\\
2\sum_{l=1}^k {q_l} + k \geq n \nonumber\\
\sum_{l=1}^k {q_l} \geq \frac{n-k}{2} \nonumber
\end{align}
Also, $q_l \geq 1$ for each natural number $k \in \{1,2,..,k\}$, so $\sum_{l=1}^k {q_l} \geq k$. Thus,
\begin{align}
\sum_{l=1}^k {q_l} \geq max\{\frac{n-k}{2},k\} \nonumber
\end{align}
Consider $\frac{n-k}{2}+\frac{n-k}{2}+k=n$, from Pigeonhole's principle, $max\{\frac{n-k}{2},k\}\geq \lceil\frac{n}{3}\rceil$
Therefore, 
\begin{align}
\sum_{l=1}^k {q_l} \geq \lceil\frac{n}{3}\rceil \nonumber
\end{align}
which means that the minimum cost of $C_n$ is at least $\lceil\frac{n}{3}\rceil$. It is sufficient to find the example of the $\gamma_b-$dominating broadcast function $x$ of $C_n$ with cost $\lceil\frac{n}{3}\rceil$. We provide $3$ following cases.

\textbf{Case $1$} $n = 3k$ for some positive integer $k$

Let $P_3,P_6,...,P_{3k}$ are broadcast vertices such that signal strength of each broadcast vertex is $1$. Therefore, the cost of this function is $k=\lceil{\frac{n}{3}}\rceil$. Consider that $P_{3i+1}$ can receive a signal from $P_{3i}$, and $P_{3i+2}$ can receive a signal from $P_{3i+3}$. Thus, this dominating broadcast function can send a signal to cover all vertices in the graph as desired.

\begin{figure} [htbp]
\centering
\includegraphics[width=8cm]{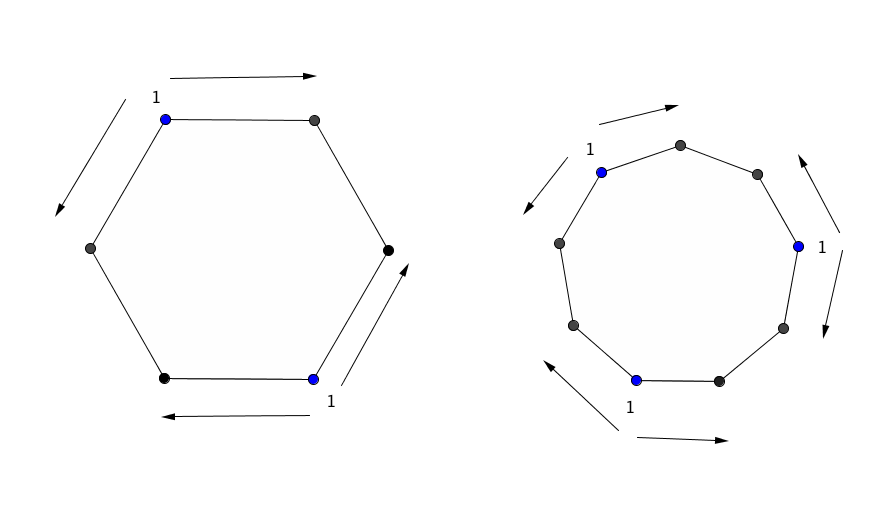}
\caption{an example of the $\gamma_b-$dominating function of $C_3$ (left) and $C_6$ (right) }
\end{figure}

\textbf{Case $2$} $n=3k+1$ for some positive integer $k$

Let $P_3,P_6,...,P_{3k}$ and $P_1$ are broadcast vertices such that signal strength of each broadcast vertex is $1$. Therefore, the cost of this function is $k+1 =\lceil{\frac{n}{3}}\rceil$. Consider that $P_{3i+1}$ can receive a signal from $P_{3i}$, and $P_{3i+2}$ can receive a signal from $P_{3i+3}$. Thus, this dominating broadcast function can send a signal to cover all vertices in the graph as desired.

\begin{figure} [htbp]
\centering
\includegraphics[width=8cm]{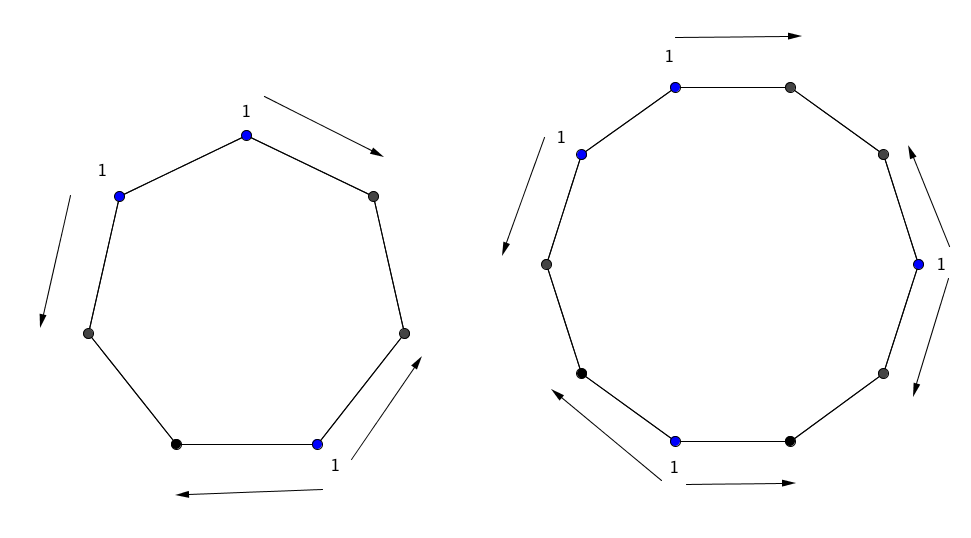}
\caption{an example of the $\gamma_b-$dominating function of $C_7$ (left) and $C_{10}$ (right) }
\end{figure}

\textbf{Case $3$} $n=3k+2$ for some positive integer $k$

Let $P_3,P_6,...,P_{3k}$ and $P_1$ are broadcast vertices such that signal strength of each broadcast vertex is $1$. Therefore, the cost of this function is $k+1 =\lceil{\frac{n}{3}}\rceil$. Consider that $P_{3i+1}$ can receive a signal from $P_{3i}$, and $P_{3i+2}$ can receive a signal from $P_{3i+3}$. Finally, $P_{3k+2}$ can receive a signal from $P_{1}$. Thus, this dominating broadcast function can send a signal to cover all vertices in the graph as desired.

\begin{figure} [htbp]
\centering
\includegraphics[width=8cm]{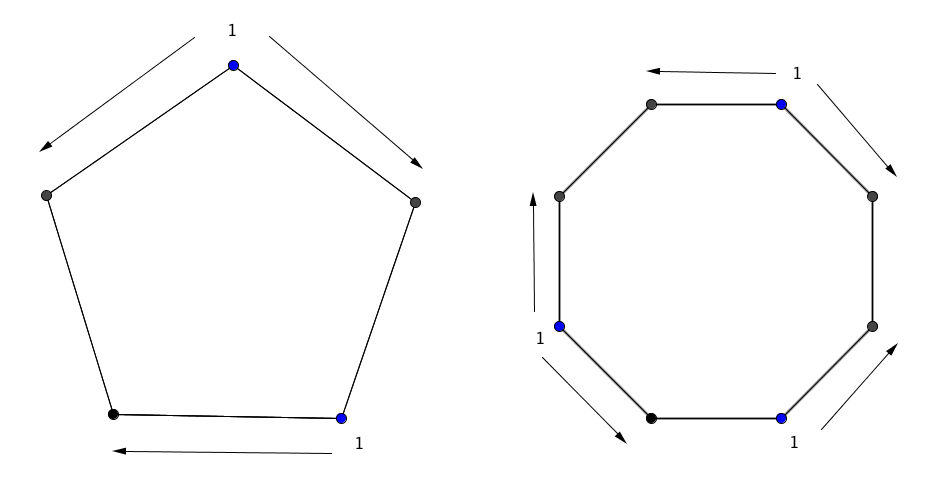}
\caption{an example of the $\gamma_b-$dominating function of $C_5$ (left) and $C_{8}$ (right) }
\end{figure}

From all three cases, we can conclude that $\gamma_b(C_n)=\lceil \frac{n}{3}\rceil$ for each natural number $n\geq 3$ \end{proof}

\begin{theorem}  
\label{2}
$\gamma_b(S_n)=\lceil \frac{n+1}{2}\rceil$ for each natural number $n\geq 3$ \end{theorem}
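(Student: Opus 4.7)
My plan is to prove matching upper and lower bounds on $\gamma_b(S_n)$. The upper bound will fall out of the corollaries already proved, while the lower bound asks for a counting argument in the spirit of \textbf{Theorem 1}, but focused on the $n$ pendant vertices rather than on the entire vertex set.

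For the upper bound, I would first observe that the sunlet graph $S_n$ of \textbf{Definition 4} is precisely the graph $S_n^1$ of \textbf{Definition 5}. \textbf{Corollary 2} then gives $rad(S_n) = \lfloor n/2 \rfloor + 1$, and a short parity check shows $\lfloor n/2 \rfloor + 1 = \lceil (n+1)/2 \rceil$ for every $n \geq 3$. Applying \textbf{Corollary 3} immediately yields $\gamma_b(S_n) \leq \lceil (n+1)/2 \rceil$, realized by placing a single broadcast vertex at any base vertex with strength $\lceil (n+1)/2 \rceil$.

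For the lower bound, I would take an arbitrary dominating broadcast function with broadcast vertices $v_1, \ldots, v_k$ and strengths $q_1, \ldots, q_k$, and focus only on the $n$ pendant vertices. The key claim to establish is that a single broadcast of strength $q$ can cover at most $2q - 1$ pendant vertices, regardless of whether the broadcast sits on the cycle or on a pendant. When $v_l = c_j$ is a base vertex, a pendant $p_i$ is covered iff $d(c_j, c_i) \leq q_l - 1$, and $C_n$ contains at most $2q_l - 1$ such cycle vertices. When $v_l = p_j$ is a pendant, the identity $d(p_j, p_i) = d(c_j, c_i) + 2$ for $i \neq j$ cuts the count further to at most $2q_l - 3$ (with the small-strength cases $q_l \in \{1,2\}$ checked directly, where only $p_j$ itself is reached), still within the bound. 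Summing over $l$ and using that every pendant must be reached gives $\sum_{l=1}^{k} (2q_l - 1) \geq n$, hence $\sum q_l \geq (n + k)/2 \geq (n+1)/2$ since $k \geq 1$, and integrality of $\sum q_l$ forces $\sum q_l \geq \lceil (n+1)/2 \rceil$.

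The main obstacle, as I see it, is packaging the per-broadcast bound "at most $2q - 1$ pendants" cleanly, since it requires a short case split on the type of broadcast vertex together with a check of the degenerate strengths. Once that is in place, the remaining arithmetic is lighter than the pigeonhole step used for the cycle in \textbf{Theorem 1}.
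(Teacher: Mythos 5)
Your argument is correct, but it takes a genuinely different route from the paper. The paper invokes \textbf{Corollary 1} to pass to an efficient $\gamma_b$-broadcast and then proves a structural claim that such a broadcast on $S_n$ must consist of a \emph{single} broadcast vertex (if two broadcast vertices existed, the pendant attached to the last cycle vertex reached by the first broadcast would force that cycle vertex to hear both signals, violating efficiency); the lower bound $\gamma_b(S_n)\ge rad(S_n)$ then follows because one vertex must cover everything. You instead bypass efficiency entirely and run a coverage-counting argument on the $n$ pendant vertices alone: your per-broadcast bound of at most $2q-1$ pendants is correct in both cases (the base-vertex case gives exactly $2q-1$, and the pendant-vertex case gives $2q-3$ for $q\ge 2$ and $1$ for $q=1$, both within the bound), and the resulting inequality $\sum q_l \ge (n+k)/2 \ge (n+1)/2$ together with integrality closes the lower bound; your upper bound via \textbf{Corollary 2} and \textbf{Corollary 3} matches after the parity check $\lfloor n/2\rfloor+1=\lceil(n+1)/2\rceil$. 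What each approach buys: the paper's structural claim feeds directly into the later theme that a single broadcast vertex at the radius is optimal for $S_m^n$, whereas your counting argument is more elementary and self-contained --- it applies to an arbitrary dominating broadcast, does not rely on the cited efficiency result, and mirrors the averaging proof of \textbf{Theorem 1} --- arguably making it the more robust of the two proofs of this particular statement.
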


\begin{figure} [htbp]
\centering
\includegraphics[width=8cm]{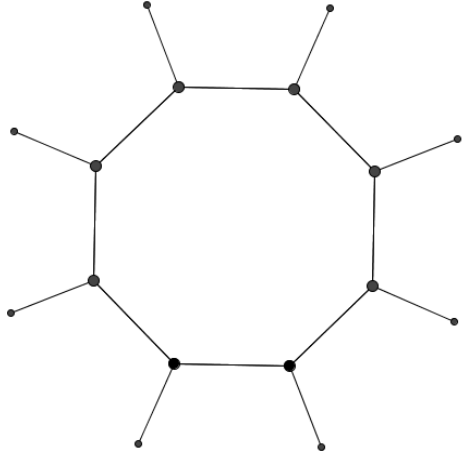}
\caption{Sunlet graph $S_8$}
\end{figure}

\begin{proof} 
Consider any $\gamma_b-$dominating broadcast $x$, which is also efficient broadcast.

\begin{claim} there is only one broadcast vertex for the efficent broadcast $x$. \end{claim}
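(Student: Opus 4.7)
The plan is to use the fact that in an efficient dominating broadcast every vertex is covered by exactly one broadcast vertex, so summing across all broadcast vertices the total number of base vertices covered equals $n$, and likewise the total number of leaves covered equals $n$. Writing the broadcast vertices of $x$ as $v_1,\ldots,v_k$ with strengths $s_1,\ldots,s_k$, and letting $a_i$ and $\ell_i$ denote the number of base vertices and leaves, respectively, contained in the ball of radius $s_i$ around $v_i$, efficiency and domination give $\sum_{i=1}^k a_i = \sum_{i=1}^k \ell_i = n$.

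The first step is to compute $a_i$ and $\ell_i$ in closed form by splitting on whether $v_i$ is on the cycle or is a pendant and on its strength. A direct distance calculation yields: if $v_i$ is a base, then $a_i = \min(n,\, 2s_i+1)$ and $\ell_i = \min(n,\, 2s_i-1)$; if $v_i$ is a leaf with $s_i = 1$, then $a_i = \ell_i = 1$; and if $v_i$ is a leaf with $s_i \geq 2$, then $a_i = \min(n,\, 2s_i-1)$ and $\ell_i = \min(n,\, 2s_i-3)$. In every case one checks that $\delta_i := a_i - \ell_i \geq 0$.

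The key observation is that $\delta_i = 0$ occurs only in very restricted situations: either $v_i$ is a leaf of strength $1$, or both $a_i$ and $\ell_i$ already equal $n$. Unwinding the latter condition, it forces $v_i$ to be a base with $s_i \geq \lceil (n+1)/2 \rceil$ or a leaf with $s_i \geq (n+3)/2$, and in either of these subcases the ball around $v_i$ already contains all of $V(S_n)$. Since $\sum_i \delta_i = n - n = 0$ and each $\delta_i \geq 0$, every $\delta_i$ must vanish, so every broadcast vertex falls into one of these three configurations.

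A short case analysis finishes the claim. If some $v_i$ has ball equal to all of $V(S_n)$, efficiency forces $k = 1$ as desired. Otherwise every broadcast vertex is a leaf of strength $1$, each covering exactly one base; domination then forces $k = n$ and total cost $n$. But by Corollary 2 and Corollary 3, $\gamma_b(S_n) \leq \operatorname{rad}(S_n) = \lceil (n+1)/2 \rceil$, which is strictly less than $n$ for $n \geq 3$, contradicting the $\gamma_b$-optimality of $x$. The main technical care will lie in the closed-form count of $a_i$ and $\ell_i$, especially in pinning down the precise strength thresholds at which the ball of a broadcast vertex wraps completely around the cycle, and in treating the small-strength leaf case where the naive formula $2s_i - 3$ would otherwise be nonpositive.
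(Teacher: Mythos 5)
Your argument is correct, and it takes a genuinely different route from the paper's. The paper argues locally: it picks two broadcast vertices $v_1,v_2$ with no other broadcast vertex between them, walks along the shortest path $v_1,u_1,\dots,u_k,v_2$, and exhibits a pendant vertex $u_i'$ (attached to the last $u_i$ reached by $v_1$) that is forced to receive signals from both $v_1$ and $v_2$. Your argument is global: each broadcast ball covers at least as many base vertices as leaves ($\delta_i\geq 0$), efficiency makes both totals equal to $n$, so every $\delta_i$ vanishes, which pins each broadcast vertex down to one of the degenerate configurations; your closed-form counts $\min(n,2s+1)$, $\min(n,2s-1)$, $\min(n,2s-3)$ are all correct, as is the threshold analysis for when a ball wraps around the whole graph. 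The most substantive difference is that your proof uses the $\gamma_b$-minimality of $x$ (to kill the ``all leaves with strength $1$'' configuration via $\gamma_b(S_n)\leq rad(S_n)=\lceil\frac{n+1}{2}\rceil<n$), while the paper's proof invokes only efficiency. That is not a defect of yours but a necessity: assigning strength $1$ to every leaf of $S_n$ is an efficient dominating broadcast with $n$ broadcast vertices, so the claim is false for efficient broadcasts that are not also optimal, and any correct proof must use optimality somewhere --- indeed, in exactly this configuration the paper's pendant vertex $u_i'$ coincides with $v_1$ itself, so its distance-$(i+1)$ computation breaks down. Your double-counting route is therefore more robust, at the cost of a somewhat longer case analysis of the ball sizes.
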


\begin{proof} For the sake of contradiction, assume that there are at least two broadcast vertices for the function $x$. Because the number of branches is finite, there are two different broadcast vertices, $v_1,v_2$, which locate on two branches such that there is no another broadcast vertex on the short arc between two branches. 

It is obvious that $v_1,v_2$ are on different branches; otherwise, $v_1$ can receive signals from both $v_1$ and $v_2$, which contradicts with the property of the efficient broadcast function.

\begin{figure} [htbp]
\centering
\includegraphics[width=8cm]{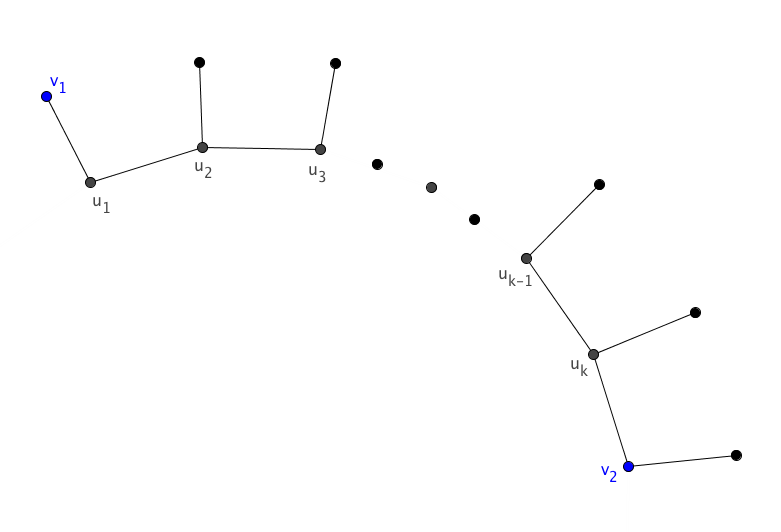}
\caption{The shortest path between $v_1$ and $v_2$ (blue vertices)}
\end{figure}

Let vertices on the shortest path between $v_1$ and $v_2$ be $v_1,u_1,u_2,...,u_k,$ and $v_2$ in this order. Consider that $u_1,u_2,...,u_k$ cannot be pendant vertices because the degree of pendant vertices is just $1$, so it is impossible to have a pendant vertex as a vertex between the path.  

By the definition, $u_1,u_2,...,u_k$ cannot receive from broadcast vertices, except $v_1,v_2$. Let $i \in \{1,2,...,k\}$ be the largest positive integer such that $v_1$ can send a signal to $u_i$. Therefore, $u_{i+1}$ cannot receive a signal from $v_1$, so the signal strength of $v_1$ is $i$.

However, the distance between $v_1$ and the pendant vertex $u'_i$ of the branch which consists of $u_i$ is $i+1$. Thus, $u'_i$ cannot receive a signal from $v_1$. Therefore, $u'_i$ has to receive a signal from $v_2$. Consider that the only shortest path between $u'_i$ and $v_2$ consists of $u_i$, which means that $u_i$ can receive a signal from $v_2$. We can conclude that $u_i$ can receive signals from both $v_1$ and $v_2$, which contradicts with the definition of the efficient broadcast, as desired. \end{proof}

We know that the minimum cost of $\gamma_b-$ dominating broadcast occurs when there is just one broadcast vertex. Thus, $rad(S_n)= \gamma_b(S_n)$. Consider that $rad(S_n)=\lceil \frac{n+1}{2} \rceil$, therefore, $\gamma_b(S_n)=\lceil \frac{n+1}{2} \rceil.$\end{proof}.

\begin{figure} [htbp]
\centering
\includegraphics[width=8cm]{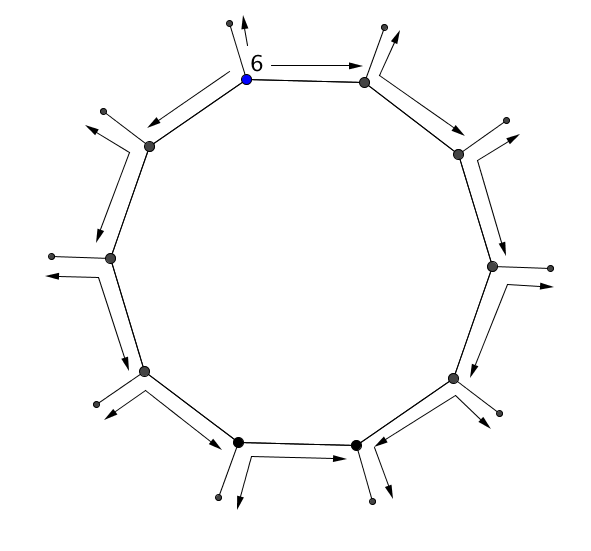}
\caption{The example of $\gamma_b-$ dominating broadcast of $S_{10}$}
\end{figure}

Next, we will consider Sunlet degree $n$ where its cycle graph is $C_3$
\begin{theorem} 
\label{3}
$\gamma_b(S^m_3)=m+1$ for each natural number $m$ \end{theorem}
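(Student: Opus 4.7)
The plan is to prove the two inequalities $\gamma_b(S_3^m) \leq m+1$ and $\gamma_b(S_3^m) \geq m+1$ separately. The upper bound is immediate from the earlier corollaries: Corollary 2 gives $rad(S_3^m) = \lfloor 3/2 \rfloor + m = m+1$, and Corollary 3 then yields $\gamma_b(S_3^m) \leq rad(S_3^m) = m+1$; a witness is the broadcast function that places a single transmitter of strength $m+1$ at any base vertex.

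For the lower bound $\gamma_b(S_3^m) \geq m+1$ I would use a weighted counting argument. Assign each of the $3$ base vertices of $S_3^m$ the weight $2$, and each of the $3m$ pendant vertices the weight $3$, so the total weight of the graph is $3\cdot 2 + 3m\cdot 3 = 9m+6$. The central claim is that for every vertex $v$ and every positive integer strength $q$, the total weight of the ball $B(v,q)$ is at most $9q$. Granted this claim, if $f$ is any dominating broadcast function with broadcast vertices $B_1,\dots,B_k$ of respective strengths $q_1,\dots,q_k$, then the fact that every vertex is covered by at least one ball gives
\[
9m+6 \;=\; \sum_{u \in V(S_3^m)} w(u) \;\leq\; \sum_{j=1}^{k}\sum_{u \in B(B_j,q_j)} w(u) \;\leq\; \sum_{j=1}^{k} 9\, q_j \;=\; 9\cdot\mathrm{cost}(f),
\]
which forces $\mathrm{cost}(f) \geq m + \frac{2}{3}$; since the cost is a positive integer, $\mathrm{cost}(f) \geq m+1$.

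The main obstacle will be verifying the weight-versus-strength inequality $w(B(v,q)) \leq 9q$ for all pairs $(v,q)$, and this is a case analysis on the location of $v$ (base, interior pendant, or leaf) and on the size of $q$. The tight case, which is exactly what dictates the choice of weights, is a broadcast at a base $b_i$ with $1 \leq q \leq m$: the ball then contains the three bases, the first $q$ pendants on the branch of $b_i$, and the first $q-1$ pendants on each of the other two branches, giving weight $6 + 3q + 2\cdot 3(q-1) = 9q$ exactly. For $q \geq m+1$ at a base the ball is all of $V(S_3^m)$, with weight $9m+6 \leq 9q$. When $v$ lies on some branch at depth $d \geq 1$, the ball $B(v,q)$ covers strictly fewer vertices on the other two branches than the corresponding base broadcast would, and splitting into the sub-cases according to whether $B(v,q)$ reaches the base of $v$'s branch and whether it reaches the leaf shows that $w(B(v,q)) \leq 9q$ in every case; the leaf case $v=\ell_i$ is analogous. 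Once the weight inequality is established, the displayed estimate yields the lower bound, and combining it with the upper bound completes the proof of $\gamma_b(S_3^m) = m+1$.
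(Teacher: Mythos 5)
Your proposal is correct, but it proves the lower bound by a genuinely different method than the paper. The paper takes an optimal broadcast that is efficient (Corollary 1), picks a broadcast vertex $X$ that reaches a base vertex, splits into two cases according to whether $X$ covers one or all three base vertices, observes that the vertices not covered by $X$ decompose into two or three disjoint paths of computable lengths, and then lower-bounds the remaining cost by $\gamma_b(P_n)=\lceil n/3\rceil$ on each path, finishing with ceiling manipulations to get $\mathrm{cost}\geq m+1=rad(S_3^m)$. You instead run a weighting (dual/discharging) argument: weight $2$ on each base vertex and $3$ on each pendant gives total weight $9m+6$, and the key inequality $w(B(v,q))\leq 9q$ for every vertex $v$ and strength $q$ immediately forces $9\cdot\mathrm{cost}\geq 9m+6$, hence $\mathrm{cost}\geq m+1$ by integrality. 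I checked your ball-weight inequality in all cases (base vertex with $q\leq m$ is tight at exactly $9q$; base vertex with $q\geq m+1$ gives $9m+6\leq 9q$; a pendant at depth $d$ gives at most $9q-3d$ when the ball reaches the cycle and at most $6q+3$ when it does not), so the argument goes through. What your route buys: it needs neither the existence of efficient broadcasts nor the implicit independence of the leftover paths that the paper's decomposition relies on, and it handles all broadcast configurations uniformly rather than by cases; the price is that the weights are ad hoc to $C_3$ as the underlying cycle and do not obviously feed into the induction on the cycle length that the paper uses for Theorem 4, whereas the paper's path-decomposition technique is reused there.
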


\begin{proof} Consider the $\gamma_b-$domianting broadcast $x$ which is also an efficient broadcast. Let $X$ is a broadcast vertex with strength $k$, which can send a signal to any base vertex. We will provide $2$ cases of $X$.

\textbf{Case 1} $X$ can send a signal to exactly one base vertex.

Therefore, $X$ cannot be a base vertex, and the distance between $X$ and its base vertex is exactly $k$. Then, the remaining vertices will be separated into two paths as \textbf{Figure 21} shown.

\begin{figure} [htbp]
\centering
\includegraphics[width=8cm]{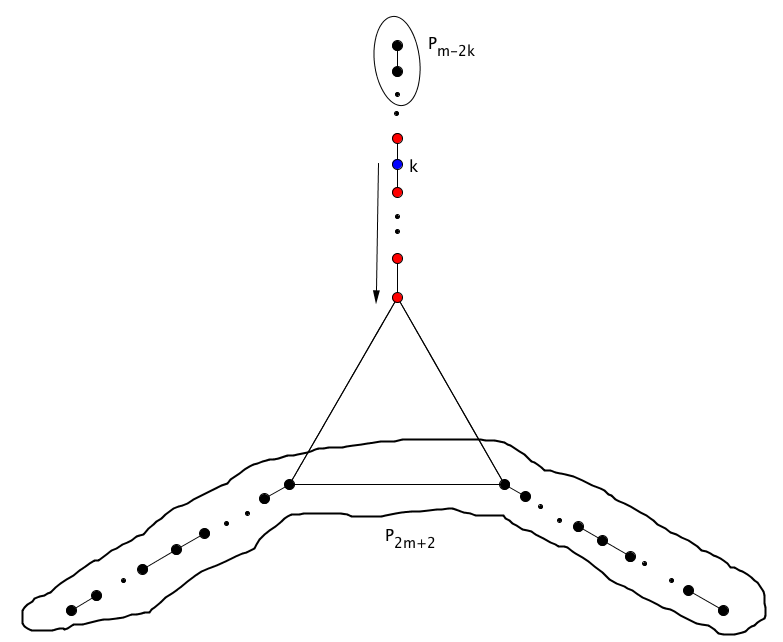}
\caption{Two disconnected paths (in closed loops) in a case that $X$ can send a signal to exactly one broadcast vertex}
\end{figure}

Therefore, the cost of this dominating function is $k+\lceil\frac{m-k}{3}\rceil+ \lceil\frac{2m+2}{3}\rceil$

\begin{align}
k+\lceil\frac{m-2k}{3}\rceil+ \lceil\frac{2m+2}{3}\rceil &\geq \lceil k+\frac{m-2k}{3}+\frac{2m+2}{3}\rceil\nonumber\\
&= \lceil \frac{3m+k+2}{3}\rceil\nonumber\\
&\geq \lceil \frac{3m+3}{3}\rceil\nonumber\\ 
&= m+1=rad(S_3^m) \nonumber 
\end{align}

\textbf{Case 2} $X$ can send a signal to at least two broadcast vertices.

Let the distance between $X$ and its base vertex is $l$. By symmetry, $X$ can send a signal to all three broadcast vertices. Then, the remaining vertices which cannot receive a signal from $X$ will be separated into three paths as \textbf{Figure 22} shown.

\begin{figure} [htbp]
\centering
\includegraphics[width=7cm]{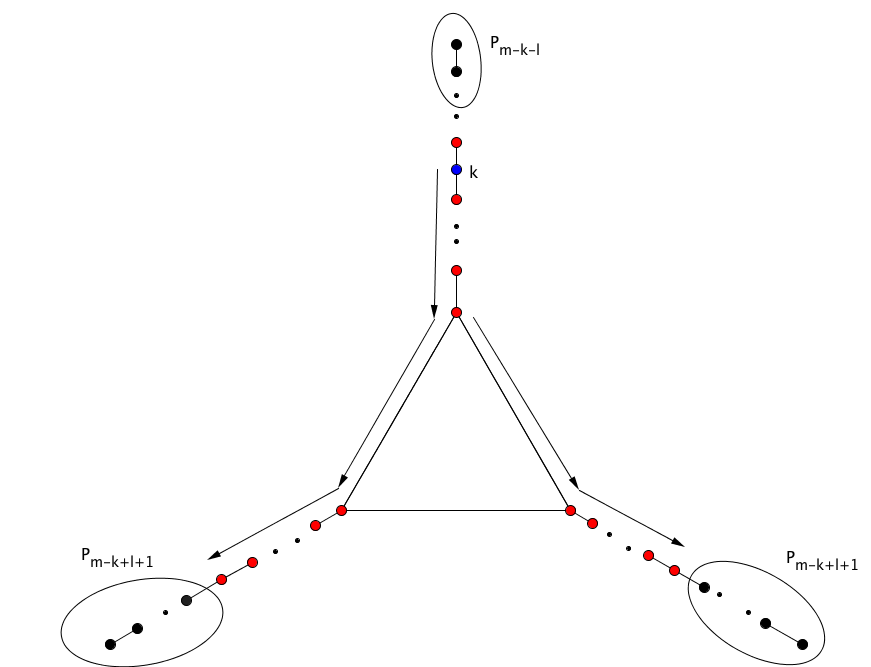}
\caption{Three disconnected paths (in ovals) in a case that $X$ can send a signal to at least two broadcast vertices}
\end{figure}
Therefore, the cost of this dominating function is $k+\lceil\frac{m-k-l}{3}\rceil+ 2\lceil\frac{m-k+l+1}{3}\rceil$

\begin{align}
k+\lceil\frac{m-k-l}{3}\rceil+ 2\lceil\frac{m-k+l+1}{3}\rceil &\geq \lceil k+\frac{m-k-1}{3}+\frac{2(m-k+l+1)}{3}\rceil\nonumber\\
&= \lceil \frac{3m+2l+1}{3}\rceil\nonumber\\
&\geq \lceil \frac{3m+1}{3}\rceil\nonumber\\ 
&= m+1=rad(S_3^m) \nonumber 
\end{align}

Therefore, the minimum cost of dominating broadcast function of $S^m_3$ is $rad(S_3^m)$. Thus, 
\begin{align}
\gamma_b(S^m_3)=m+1 \nonumber 
\end{align} \end{proof}

Next, we will use the idea of \textbf{Theorem 2} and \textbf{Theorem 3} for finding the $\gamma_b-$dominating broadcast function value for a general case of Sunlet degree $n$.  
\begin{theorem}  
\label{4}
$\gamma_b(S^n_m)=n+\lfloor \frac{m}{2}\rfloor$ for each natural number $n\geq 1$ and $m \geq 3$
\end{theorem}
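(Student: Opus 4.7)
The upper bound $\gamma_b(S^n_m) \le n + \lfloor m/2 \rfloor$ is immediate: Corollary 2 gives $rad(S^n_m) = n + \lfloor m/2 \rfloor$, and Corollary 3 says $\gamma_b(G) \le rad(G)$. Concretely, a single broadcast of strength $n + \lfloor m/2 \rfloor$ placed at any base vertex dominates the entire graph.

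For the lower bound, the plan is to adapt the argument of Theorem 3 from $m=3$ to arbitrary $m \ge 3$. First apply Corollary 1 to restrict to an efficient $\gamma_b$-dominating broadcast $x$. Since every base vertex must receive a signal, some broadcast vertex $X$ has ball containing at least one base vertex; fix such an $X$ with strength $k$ and distance $d \in \{0,1,\dots,n\}$ from its nearest base vertex $B_0$ (so $k \ge d$), and define the \emph{cycle-reach} $r := k-d$. Then $X$ covers exactly the base vertices within cycle-distance $r$ of $B_0$, and on each covered branch it covers a prefix from the base — of length $2d+r$ on its own branch $0$ and of length $r-|i|_c$ on branch $i \ne 0$, where $|i|_c$ denotes the cycle-distance of $B_i$ to $B_0$.

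The structural observation driving the lower bound is that, by efficiency, every other broadcast vertex $Y$ lies outside $X$'s ball, and since $Y$'s ball is connected in $S^n_m$ it is confined to a single connected component of the uncovered subgraph. I would then split into \textbf{Case A}, where $r \ge \lfloor m/2 \rfloor$ so $X$ reaches every base vertex, and \textbf{Case B}, where $r < \lfloor m/2 \rfloor$. In Case A the uncovered components are just disjoint branch tails of lengths $t_i = \max(0, n-r+|i|_c)$ for $i \ne 0$ and $t_0 = \max(0, n-2d-r)$, each an induced path, and so Corollary 4 gives total cost $\ge k + \sum_i \lceil t_i/3 \rceil$; collapsing ceilings via $\lceil a\rceil + \lceil b\rceil \ge \lceil a+b\rceil$, using the identity $\sum_{i \ne 0} |i|_c = \lfloor m^2/4 \rfloor$, and a parity bookkeeping on $m$, should yield $n + \lfloor m/2 \rfloor$. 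In Case B the uncovered region additionally contains a \emph{caterpillar} whose spine is the arc of $m-2r-1$ uncovered base vertices, each carrying a full length-$n$ branch; I would lower-bound the cost of covering the caterpillar by extracting a suitable long path through it (for example, from an extreme leaf through the spine to the opposite leaf, of length $2n+m-2r-1$), applying the path bound from Corollary 4, and then combining with the tail bounds exactly as in Case A.

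The main obstacle is the verification in Case B: the caterpillar is not itself a disjoint union of paths, so the long-path lower bound leaves slack that must be absorbed by the covered-branch tail contributions; moreover, efficient broadcasts inside the caterpillar are constrained to stay away from its boundary vertices $B_{r+1}, B_{-r-1}$ (since those are adjacent to $X$'s ball), which introduces further bookkeeping. I expect the cleanest route is to split Case B further by parity of $m$ and by whether $r + n \ge \lfloor m/2 \rfloor$, mirroring the two-case structure of Theorem 3, and to obtain the final bound as a chain of ceiling estimates of the form $\lceil a \rceil + \lceil b \rceil \ge \lceil a+b \rceil \ge n + \lfloor m/2 \rfloor$, with the constants chosen to absorb the slack produced by small values of $k$ and $d$.
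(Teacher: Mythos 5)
Your upper bound and your Case A are sound, and Case A essentially reproduces what the paper does for its ``one broadcasting base vertex'' case. But Case B contains a genuine gap, and the specific tool you propose for it cannot work. Lower-bounding the cost of dominating the uncovered caterpillar by the cost of dominating one long path extracted from it loses far too much: the path bound from Corollary 4 scales like one third of the path length, whereas the caterpillar's true broadcast domination cost scales like half the number of spine vertices. Concretely, take $n=1$, $d=0$, $k=r=1$: then $X$ covers three base vertices and one pendant, the uncovered tails contribute $2$, and your extracted path through the caterpillar has $m-1$ vertices, giving a total lower bound of roughly $1+2+\lceil (m-1)/3\rceil \approx m/3$, which falls short of the target $1+\lfloor m/2\rfloor$ by about $m/6$. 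That deficit cannot be ``absorbed by the covered-branch tail contributions,'' because in this instance those contribute only $2$. The missing idea is that the broadcast vertices inside the caterpillar must themselves be analyzed structurally --- each one that reaches a base vertex spawns its own covered arc and its own sub-caterpillars --- so the argument is inherently recursive rather than a single case split.

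This is exactly where the paper's proof diverges from yours. The paper inducts on $m$: it first shows (Claim 2, using the inductive hypothesis $P(m-1)$ and an edge-cutting/edge-adding argument via Corollary 5) that no broadcast vertex's signal can terminate exactly at its nearest base vertex; it then shows (Claim 3, via an exchange algorithm $\Phi$ and an extremal argument) that one may assume every broadcast vertex reaching a base vertex \emph{is} a base vertex. With that normalization the broadcasting base vertices have strengths $p_1,\dots,p_t$ satisfying $\sum(2p_i+1)=m$, and the total cost becomes an explicit function $F$ of these strengths; two discrete-convexity claims (Claims 5 and 6) show the minimum occurs at $t=1$, which reduces to your Case A. If you want to salvage your direct approach, you would need to replace the long-path extraction in Case B with either this kind of induction on $m$ or an averaging argument over all broadcast vertices meeting the cycle simultaneously; as written, Case B is not a proof.
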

\begin{figure} [htbp]
\centering
\includegraphics[width=8cm]{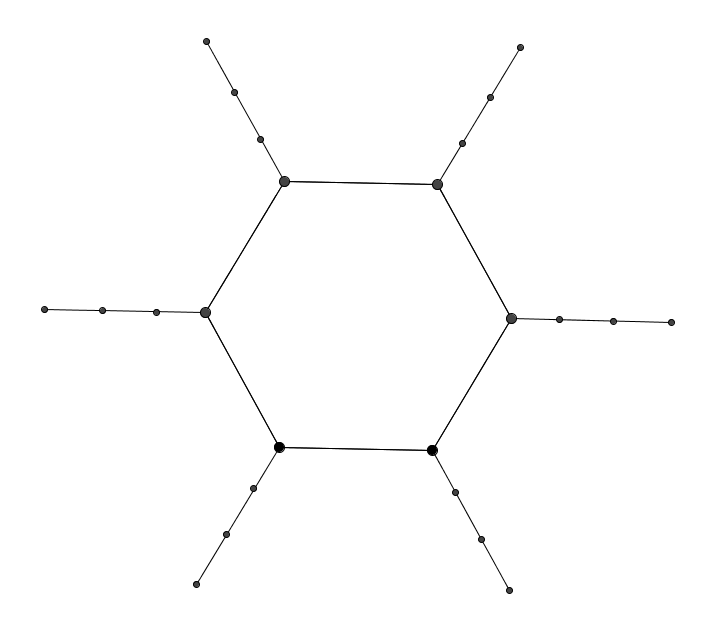}
\caption{Graph $S^6_3$}
\end{figure}

\begin{proof}
Let $P(i)$ represents the statement "$\gamma_b(S^n_i)=i+\lfloor \frac{n}{2}\rfloor$ for each natural number $n \geq 1$," for each natural number $i\geq 3$.
We will use the mathematical induction on a natural number $n$.

For the base case, we need to prove $P(3)$ is true. By \textbf{Theorem 4}, 
\begin{align}
\gamma_b(S^n_3)=n+1=n+\lfloor \frac{3}{2}\rfloor \nonumber
\end{align}

For the induction case, we assume that $P(m-1)$ is true where $m$ is a positive integer greater than or equal 4, and we need to prove that $P(m)$ is true.

Let $A$ be a set of ${\gamma}_{b}-$ broadcast functions of $S^n_m$. Let $A'$ be a subset of the set $A$ such that for each ${\gamma}_{b}$- efficient broadcast function of $S^n_m$, all vertices that can send signal to a base vertex are base vertices. Let $B \subseteq A$ such that each ${\gamma}_{b}-$ broadcast function in $B$ does not has base vertex which receives a signal from two broadcast vertices. It is obvious that $B \ne \emptyset$ because the efficient broadcast function is an element of $B$   

\begin{claim} If $n>2$, there is no function in $B$ such that there is a broadcast vertex $M$ which can send a signal to the nearest base vertex $N$, and its signal strength is $n=d(M,N)$ \end{claim}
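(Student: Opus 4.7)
The plan is to contradict the $\gamma_b$-minimality of any such $f \in B$ by constructing a dominating broadcast function of strictly smaller cost.

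First, I would pin down the location of the hypothetical broadcast vertex $M$. Since $N$ is the nearest base vertex to $M$ and $d(M,N)=n$, and since every pendant vertex on a branch sits at distance at most $n$ from its own base (with equality only for the leaf), $M$ must be a leaf and $N$ the base of the branch containing $M$.

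Next, I would describe the coverage of $M$ exactly. With strength $n$, the leaf $M$ reaches precisely the $n+1$ vertices of its own branch (the leaf itself, the $n-1$ interior pendants, and the base $N$); every vertex outside this branch is at distance at least $n+1$ from $M$, because the shortest path from $M$ to any off-branch vertex must first traverse the branch to $N$ and continue for at least one more edge.

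The key move is to modify $f$ by turning off the broadcast at $M$ (setting $f(M)=0$) and installing a broadcast of strength $n-1$ at the vertex $M'$ adjacent to $M$ on the branch (the second-to-leaf pendant). Because $n>2$, $M'$ is a genuine pendant vertex distinct from $N$, and $n-1$ is positive. A direct distance check shows that $M'$ with strength $n-1$ still reaches every vertex of the branch (the leaf at distance $1$, the interior pendants at distances up to $n-2$, and $N$ at distance $n-1$) and no vertex off the branch. So the new function is a dominating broadcast, and its cost is at most one less than that of $f$.

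The only bookkeeping concern is that $M'$ might already be a broadcast vertex of $f$; in that case I would set the new strength at $M'$ to $\max(f(M'), n-1)$ and verify that the total cost still drops by at least $1$. With this small case-check the strictly smaller cost contradicts $f \in B \subseteq A$, where $A$ consists of $\gamma_b$-broadcast functions, giving the desired contradiction. The main obstacle is not technical but conceptual: recognizing that strength $n$ at a leaf is wasteful by exactly one unit. Once the one-step-inward shift is identified, the distance calculations and the cost comparison are entirely routine.
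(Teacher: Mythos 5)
Your proof is correct, and it takes a genuinely different and more elementary route than the paper. You pin down $M$ as a leaf, observe that the ball of radius $n$ centred at the leaf coincides with the ball of radius $n-1$ centred at its neighbour $M'$ (both are exactly the branch together with its base, since every off-branch vertex lies at distance at least $n+1$ from $M$ and at least $n$ from $M'$), and shift the broadcast one step inward to save one unit of cost; your handling of the case where $M'$ already carries a positive strength is also sound. This contradicts cost-minimality directly, without ever invoking the defining property of $B$, the induction hypothesis $P(m-1)$, the path formula of Corollary~4, or the radius bound of Corollary~3 --- all of which the paper's proof relies on. The paper instead argues that, because a base vertex in a $B$-function receives only one signal, no signal can cross the two cycle edges at $N$; it deletes those edges, decomposes the graph into the branch $P_{n+1}$ and a remainder containing $S^n_{m-1}$ (after restoring one edge), and shows the combined lower bound $\lceil\frac{n+1}{3}\rceil + n + \lfloor\frac{m-1}{2}\rfloor$ exceeds $rad(S^n_m)$ precisely when $n>2$. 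What your argument buys is brevity, locality, and a stronger conclusion (it works for every $n\ge 2$, not just $n>2$); what the paper's argument buys is a decomposition template that is reused in the surrounding induction and that would also apply to a broadcast vertex at an interior position of a branch whose strength equals its distance to the base, a configuration where your inward shift would lose coverage of the outer pendant vertices.
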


\begin{proof} We will prove this claim by the sake of contradiction. Assume that there is a broadcast vertex $M'$ which can send a signal to the nearest base vertex $N'$, and its signal strength is $n'=d(M',N')$. Let the adjacent base vertices of $N'$ be $N'_1,N'_2$.  

Therefore, this broadcast vertex cannot send a signal to a base vertex other than $N'$. Also, there is no signal crossing over the branch consisting of $M',N'$ because otherwise the signal can reach $N'$, which contradicts with the property of the set $B$. Therefore, edges $N'N'_1,N'N'_2$ are not be used for a passing way for signals. Thus, we can cut these two edges, and the $\gamma_b-$ dominating broadcast function value is still the same.

Consider that the new graph is two independent graphs: $P_{n+1}$ and the remaining graph called $S'$. By \textbf{Corollary 4}, $\gamma_b-$ dominating broadcast function value of $P_{n+1}$ is $\lceil \frac{n+1}{3} \rceil$. It is sufficient to find $\gamma_b-$ dominating broadcast function value of $S'$

Let $S''$  be the graph $S'$ with an external side $N'_1N'_2$. It can be observed that $S''$ is a graph $S^m_{n-1}$. By \textbf{Corollary 5} and $P(m-1)$,
\begin{align}
\gamma_b(S')\geq\gamma_b(S'')=\gamma_b(S^n_{m-1})=n+\lfloor \frac{m-1}{2}\rfloor\nonumber
\end{align}
Therefore,
\begin{align}
\gamma_b(S^n_m)&= \lceil \frac{n+1}{3} \rceil + \gamma_b(S') \nonumber\\
&\geq \lceil \frac{n+1}{3} \rceil + n+\lfloor \frac{m-1}{2}\rfloor\nonumber\\ 
&>n+\lfloor \frac{m}{2}\rfloor= rad(S_m^n) &&\text{($n>2$)} \nonumber
\end{align}
, which contradicts with\textbf{ Corollary 4} as desired. \end{proof}

However, if $n\leq 2$, the $\gamma_b-$dominating broadcast function value in \textbf{Claim 2} is equal to $rad(S_m^n)$. Therefore, it is sufficient to proof only in a case that there is no function in $B$ such that there is a broadcast vertex $M$ which can send a signal to the nearest base vertex $N$, and its signal strength is $n=d(M,N)$

\begin{claim} $A'\ne \emptyset$ \end{claim}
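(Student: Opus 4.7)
The plan is to start from a $\gamma_b$-efficient broadcast $f$ (which exists by Corollary~1 and automatically lies in $B$, since efficiency forces each base vertex to receive signal from at most one broadcaster) and to apply a cost-preserving local surgery repeatedly, driving the number of non-base broadcasters that reach a base vertex down to zero.

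Call a broadcaster \emph{bad} if it is not at a base vertex but its signal reaches some base vertex, and let the \emph{bad count} of a function in $B$ be its number of bad broadcasters. If $f$ has bad count zero, then $f \in A'$. Otherwise pick a bad broadcaster $M$ on some branch $b$, let $N_b$ be the base of that branch, and set $j = d(M,N_b) \geq 1$ and $k = f(M)$. Since $M$ reaches $N_b$ we have $k \geq j$, and by Claim~2 in fact $k > j$ (the case $n \leq 2$ is handled by the remark just preceding this claim). Short side arguments show that one may assume $k \leq n$ and $f(N_b) = 0$, since otherwise a minor variant of the surgery below would strictly decrease the cost, contradicting $f \in A$.

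The surgery: delete $M$ and introduce two new broadcast vertices, one at $N_b$ with strength $k-j$, and one at branch position $k$ with strength $j$. The cost is preserved, $(k-j)+j = k$, and the two new broadcasters together cover exactly $M$'s former region: the $N_b$-broadcaster produces the through-base portion of $M$'s reach (all vertices within distance $k-j$ of $N_b$), while the new branch broadcaster covers positions $k-j,k-j+1,\dots,\min(n,j+k)$ of branch $b$. The resulting $f'$ still lies in $B$---the new branch broadcaster sits at distance $k$ from $N_b$ with strength $j < k$ and so reaches no base vertex---and its bad count has strictly dropped, since $M$ is removed, the new $N_b$-broadcaster is at a base, and the new branch broadcaster is not bad. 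Iterating terminates in a $\gamma_b$-broadcast $f^* \in B$ with bad count zero, i.e., every broadcaster of $f^*$ that reaches a base is itself a base.

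The main obstacle is that $f^*$ need not be efficient, because each surgery introduces a double-cover at branch position $k-j$, and membership in $A'$ requires efficiency. To finish, I would restore efficiency branch by branch. On each branch where a surgery was applied, the new $N_b$-broadcaster covers an initial segment $1,\dots,R$ of the branch with $R \geq 1$, and the remaining positions $R+1,\dots,n$ need coverage by broadcasters on the branch only; Corollary~4 then supplies the standard efficient pattern of cost $\lceil (n-R)/3 \rceil$, which---because the sub-path sits at distance $\geq 2$ from $N_b$ and the standard pattern uses only strengths $1$ and $2$---never produces a broadcaster reaching $N_b$. Using this local replacement on each surgery branch (and leaving the non-surgery branches untouched, since there $f^*$ already agrees with the efficient $f$) yields an efficient $\gamma_b$-broadcast with the base-vertex property, i.e., an element of $A'$.
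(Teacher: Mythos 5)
Your first phase is essentially the paper's argument in a different guise: the paper fixes $x_0\in B$ minimizing the number of base vertices hit by non-base broadcasters and performs one surgery to get a contradiction, while you iterate the surgery until the bad count is zero; these are interchangeable. The surgeries themselves differ, and yours is arguably cleaner: you split $M$ into a broadcaster at $N_b$ of strength $k-j$ plus one at branch position $k$ of strength $j$, which preserves cost and coverage \emph{exactly}, whereas the paper relocates $M$ to $N$ with strength reduced by the distance and re-covers the outer part of the branch with the standard $P$-pattern, which forces its two-case comparison ($y\geq n$ versus $y<n$) to show the cost does not increase. Your use of Claim~2 to get $k>j$ (so the new branch broadcaster has strength $j<k$ and reaches no base, keeping you in $B$) is correct, and the side argument that $k\leq n$ is genuinely easy (if $k>n$, replacing $M$ by a single broadcaster at $N_b$ of strength $\max(n,k-j)<k$ covers at least as much at strictly lower cost). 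Note also that $f(N_b)=0$ is automatic from membership in $B$, since a broadcasting base vertex signals itself.

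The gap is in your efficiency-restoration phase, and it is twofold. First, you check the wrong condition: the danger is not that a broadcaster of the standard pattern on positions $R+1,\dots,n$ reaches $N_b$, but that it reaches position $R=k-j$, which is already covered by the $N_b$-broadcaster. The paper's standard pattern does exactly this when $n-R\equiv 2\pmod 3$ (its first broadcaster sits at the second vertex with strength $2$ and overhangs one step backward) and in one variant of the case $n-R\equiv 1\pmod 3$; so as written your replacement re-introduces a double cover at position $R$ and fails efficiency. This is repairable by shifting to an overhang-free efficient pattern of the same cost (e.g., for $P_5$ a single strength-$2$ broadcaster at the middle vertex), but that requires an argument you have not given. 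Second, you never verify that the replacement is cost-neutral, i.e., that $\lceil (n-R)/3\rceil$ does not exceed $j$ plus the cost of the pre-existing broadcasters covering positions beyond $j+k$; this follows from $\lceil (n-k+j)/3\rceil\leq j+\lceil (n-k-j)/3\rceil$, but it must be said. That said, whether this phase is needed at all depends on how one reads the paper's garbled definition of $A'$: the paper's own proof of this claim establishes only the weaker property (a minimum-cost function in $B$ in which every broadcaster reaching a base vertex is a base vertex) and makes no attempt to restore efficiency, so on the reading the paper itself uses, your first phase already suffices.
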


\begin{proof} We will prove this claim by the sake of contradiction. Assume that $A'$ is the empty set.

Given that $f$ is a function $B \rightarrow \mathbf{N} \cup \{0\}$, such that $f(x)$ is equal to the number of base vertices that receive signal from non-base vertex in the dominating broadcast $x \in B$. 

It is easy to see that $B$ is a finite set. From the extremal principle, there exists $x_0\in B$ such that $f(x_0)$ is the lowest value of $f(x)$. $(*)$

Because $A'$ is the empty set, $f(x)$ cannot be $0$, which means that $f(x_0)\ne 0$. Thus, there is a non-base vertex $M$ such that $M$ can send signal to base vertices. Let $N$ be a base vertex which is closest to the vertex $M$, and the distance between $N$ and $M$ is $k$ 

Assume that a signal strength at the vertex $M$ is $n$, and there are $y$ vertices on the branch containing M and N such that they do not lie between M and N.

\begin{figure} [htbp]
\centering
\includegraphics[width=10cm]{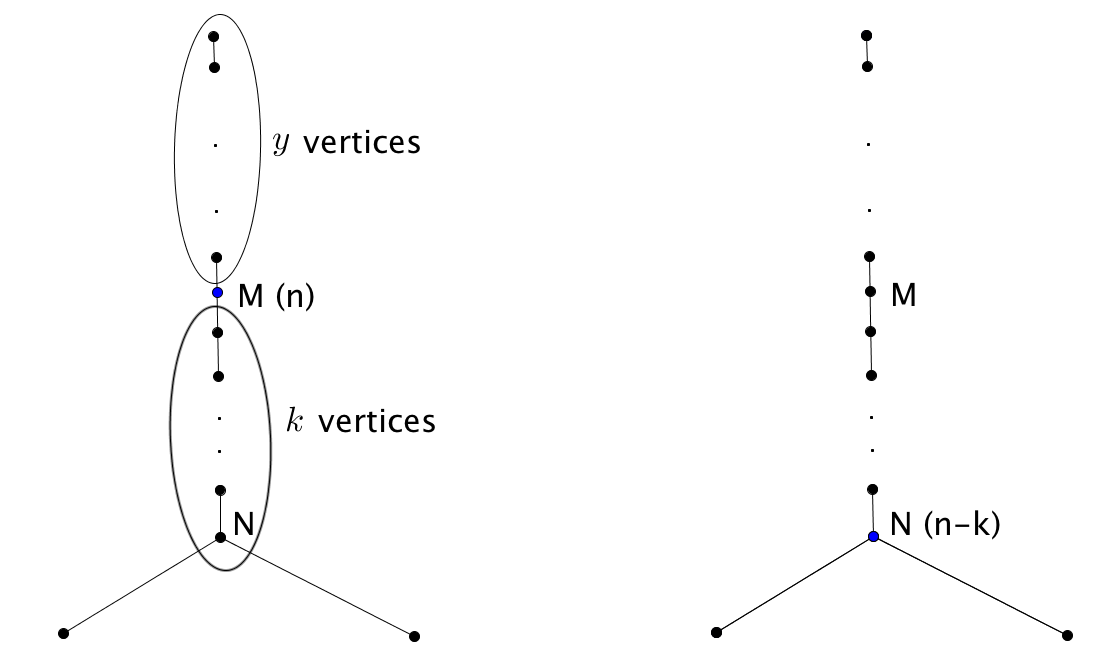}
\caption{The difference between the dominating broadcast function $x_0$ (left) and the dominating broadcast $x_1$ (right)}
\end{figure}

We will consider another dominating broadcast function $x'_0$ such that $x'_0$ and $x_0$ are the same, except vertices and their strengths on the branch containing $M$ and $N$. In the new broadcast function, the vertex $M$ is eliminated, and the vertex $N$ becomes a member of the broadcast function set with signal strength $n-k$ instead. Moreover, by \textbf{Definition 10}, pendant vertices that cannot receive signal from the vertex $N$ follow the standard pattern of ${\gamma}_{b}-$broadcast function.

If $y\geq n$, consider that a total of signal strengths in this branch for ${\gamma}_{b}-$ broadcast function $x_0$ is $n+\lceil \frac{y-n}{3}\rceil$, and a total of signal strengths in this branch for broadcast function $x'_0$ is $n-k+\lceil\frac{y-n+k}{3}\rceil$. 

\begin{center}
$n-k+\lceil\frac{y-n+k}{3}\rceil=n+\lceil\frac{y-n-2k}{3}\rceil \leq n+\lceil\frac{y-n}{3}\rceil$  
 
\end{center}

If $y<n$,consider that a total of signal strengths in this branch for ${\gamma}_{b}-$ broadcast function $x_0$ is $n$, and a total of signal strengths in this branch for the broadcast function $x'_0$ is $n-k+\lceil\frac{y-n+k}{3}\rceil$. 
\begin{center}
$n-k+\lceil\frac{y-n+k}{3}\rceil\leq n-k+\lceil\frac{k}{3}\rceil \leq n$  
 
\end{center}

Therefore, the cost of $x'_0$ is less than the cost of $x_0$, which means that $x'_0$ is also ${\gamma}_{b}-$broadcast function as $x_0$ is. Consider that the distance between $N$ and  the closest vertex which follows the standard pattern of $P_n$ is at least $2$. By \textbf{the significant property of the standard pattern}, $N$ cannot receive a signal from broadcast vertices in the path. Therefore, $N$ receives a signal only once for ${\gamma}_{b}-$broadcast function $x'_0$, $x'_0$ is a member of $B$.

However, $N$ in a new broadcast function cannot receive a signal from pendant vertices. Therefore, $f(x_0)-f(x'_0)=1$, which means that $f(x'_0)<f(x_0)$. This contradicts with the statement $(*)$, so we can conclude that $A'\ne \emptyset$ as desired. \end{proof}

Consider the proof for \textbf{Claim 3}, we assume the algorithm, which transforms the dominating broadcast function in the set $B$ to the dominating broadcast function in the set $B$ with the lower value of $f$, as $\Phi$.

If $\Phi$ is used for some branch, it does not affect to other branches because the power sent from that branch is still the same. Consider a branch where its vertices cannot send a signal to its base vertex. Therefore, if there are vertices on this branch such that they cannot receive a signal from a base vertex, after being used the algorithm $\Phi$, these vertices still cannot receive a signal from a base vertex. If there are vertices on this branch such that they can receive a signal from a base vertex, after being used the algorithm $\Phi$, these vertices still receive a signal from the same base vertex. Also, after being used the the algorithm $\Phi$, each vertex on this branch can still receive the same number of signals. We call those invariance, $V$.

\begin{figure} [htbp]
\centering
\includegraphics[width=8cm]{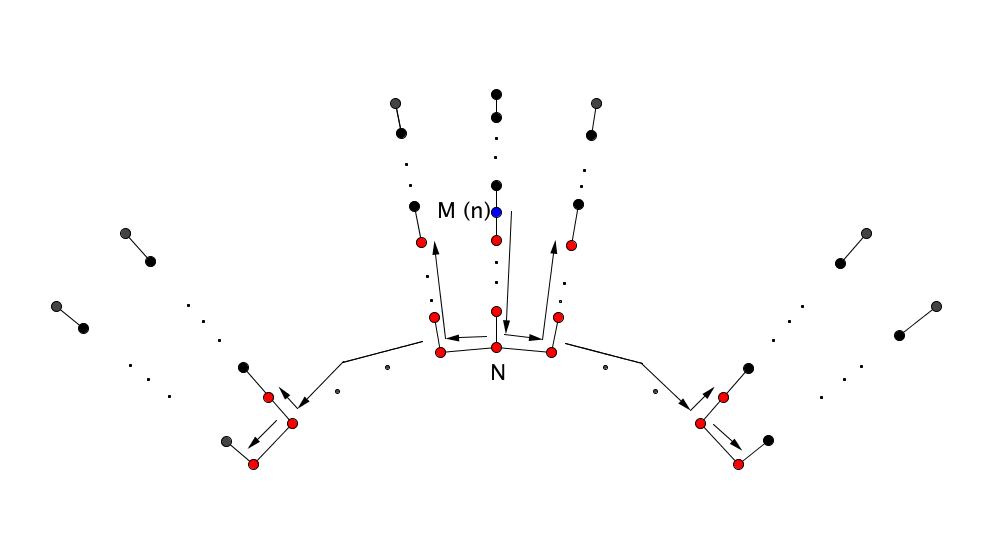}
\includegraphics[width=8cm]{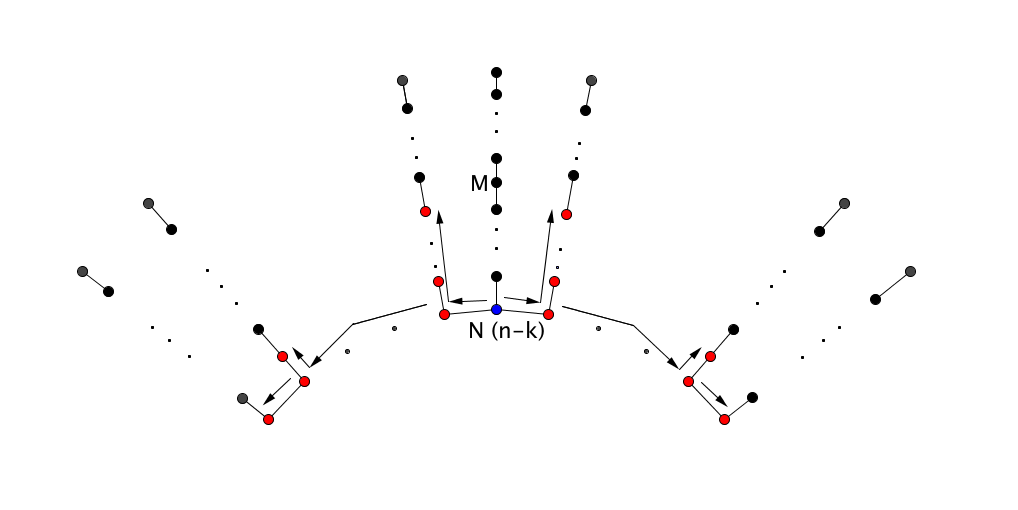}
\caption{(above) The graph before being used the algorithm $\Phi$ \newline (below) The graph after being used the algorithm $\Phi$ at the branch containing the vertex $N$ (a blue vertex) }
\end{figure}

Consider the efficient broadcast $x$. The algorithm $\Phi$ is used to transform $x$ for several times until the value of $f$ is $0$ to a new dominating broadcast $x'$. Thus, $x'\in A'$.

Consider a ${\gamma}_{b}-$broadcast function $x'$. Assume that $S$ is the set of base vertices that can send a signal. Let $|S|=t$ where $t$ is a natural number less than or equal $m$. Thus, we can order vertices in set S in clockwise order from $P_1,P_2,...,P_t$. For each $i \in \{1,2,...,t\}$, let the signal strength of $P_i$ be $p_i$.

We provide two cases for the natural number $t$

\textbf{Case $1$} $t>1$

Consider each $P_i$ as a figure shown. There are pendant vertices such that those vertices do not receive a signal from $P_i$, but their base vertices do. Assume that the set of these vertices are $T_i$. Therefore, from the definition of $A'$, those base vertices cannot receive a signal from $P_j$ where $j\in \{1,2,...,t\}-\{i\}$. Thus, those pendant vertices cannot receive a signal from any base vertices.

\begin{figure} [htbp]
\centering
\includegraphics[width=8cm]{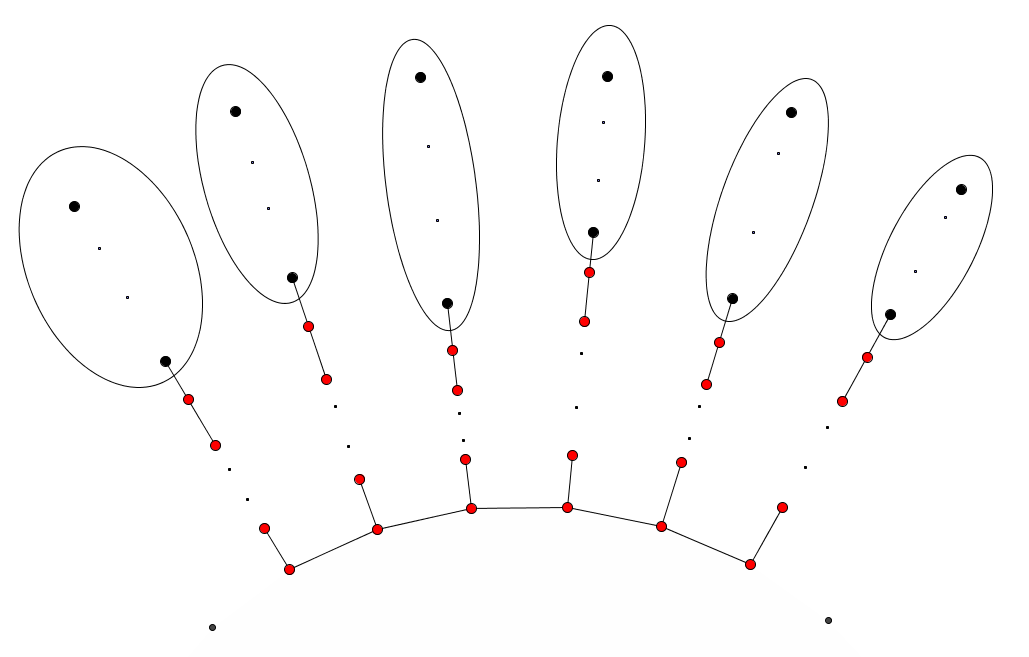}
\caption{Disconnected paths (in ovals) after a broadcast vertex sends a signal}
\end{figure}
 
Vertices in $T_i$ can be separated into several paths, corresponding to their branches. Thus, every two vertices in $T_i$ which comes from different branches cannot receive a signal from the same broadcast vertex. Otherwise, their base vertices would receive the signal twice. We can conclude that those several paths are independent to each other.

\begin{claim} $p_i<n$ for each $i \in \{1,2,...,t\}$ \end{claim}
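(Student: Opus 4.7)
The plan is to prove Claim 4 by contradiction, adapting the splitting strategy used in Claim 2. Suppose for the sake of contradiction that some base broadcast vertex has $p_i\geq n$.

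First I would observe that every pendant on the branch rooted at $P_i$ lies at distance at most $n$ from $P_i$, so a broadcast of strength $p_i\geq n$ at $P_i$ covers the entire branch. Because $x'$ is efficient, no other broadcast vertex of $x'$ may cover any vertex within distance $p_i$ of $P_i$; in particular, no non-base broadcast vertex lies on $P_i$'s branch, and both the branch and $P_i$ are covered exclusively by $P_i$'s signal, contributing at least $n$ to the total cost of $x'$.

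Second, I would delete the two cycle edges incident to $P_i$, separating $S_m^n$ into a path on $n+1$ vertices (consisting of $P_i$ and its branch) and a remainder $S'$. Adding the chord $P_{i-1}P_{i+1}$ to $S'$ yields a graph isomorphic to $S_{m-1}^n$, so by \textbf{Corollary 5} and the induction hypothesis $P(m-1)$ any dominating broadcast of $S'$ has cost at least $\gamma_b(S_{m-1}^n)=n+\lfloor (m-1)/2\rfloor$. Combining the two pieces, the total cost of $x'$ would then be at least $n+n+\lfloor(m-1)/2\rfloor$, which strictly exceeds $rad(S_m^n)=n+\lfloor m/2\rfloor$ (at least once $n$ is not too small), contradicting \textbf{Corollary 3} that $\gamma_b(S_m^n)\leq rad(S_m^n)$.

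The main obstacle will be justifying that the restriction of $x'$ to $S'$ is itself a dominating broadcast of $S'$, so that the inductive lower bound actually applies. Because $P_i$'s signal of strength $p_i\geq n$ reaches cycle vertices and shallow pendants of the neighbouring branches that lie in $S'$, and because by efficiency those vertices are covered solely by $P_i$ in $x'$, simply discarding $P_i$ leaves a band of near-$P_i$ vertices uncovered. Handling this coverage correctly—either by showing that the residual broadcast plus a small bounded supplement dominates $S'$ while the bound from $P(m-1)$ still applies, or via a direct case analysis that isolates the extra cost absorbed by $P_i$'s cycle reach—will be the step most likely to require care, in the same spirit as the end of the proof of \textbf{Claim 2}.
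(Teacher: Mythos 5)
Your proposed decomposition does not go through, and the obstacle you flag at the end is in fact fatal rather than a detail to be polished. In Claim 2 the cut along the edges $N'N'_1, N'N'_2$ is legitimate precisely because the offending broadcast vertex $M'$ has strength exactly $d(M',N')$: its signal stops at $N'$, and no signal crosses those two edges in either direction, so the two pieces are dominated independently and the inductive bound applies to each. In your situation the roles are reversed: $P_i$ is itself a base vertex with strength $p_i\geq n\geq 1$, so its signal necessarily crosses the two cycle edges at $P_i$ and covers the neighbouring base vertices and a band of their branches inside $S'$. Consequently the remaining broadcast vertices are only required to dominate $S'$ minus that band, and the inequality ``cost of the rest $\geq \gamma_b(S')\geq \gamma_b(S_{m-1}^n)$'' is simply unavailable; the total-cost lower bound $n+n+\lfloor (m-1)/2\rfloor$ has no justification. (There is also a secondary issue: even granting the bound, it only exceeds $rad(S_m^n)$ when $n\geq 2$, so small $n$ would need separate treatment.) Accounting for the cost of covering exactly the part of $S'$ that $P_i$ misses is the content of the tuple/cost-function analysis that the paper carries out \emph{after} Claim 4, so your argument in effect assumes the harder bookkeeping it was meant to avoid.

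The paper's own proof is of a different character and does not use the induction hypothesis at all. Since $t>1$ and $x'\in A'$ (no base vertex hears two broadcasts), $P_{i_0}$ with strength $p_{i_0}\geq n$ covers all of its own branch but fails to reach some other branch entirely; walking around the cycle, the covered depth on consecutive branches changes by at most one, so by a discrete intermediate-value argument some branch $C$ has exactly one uncovered vertex, namely its leaf $T$. The invariance $V$ and efficiency force $T$ to be a broadcast vertex, but then $T$'s signal reaches its neighbour, which already receives a signal from $P_{i_0}$ --- contradicting the single-reception property. If you want to rescue a contradiction along your lines, this is the mechanism to aim for; the cut-and-induct strategy of Claim 2 cannot be transplanted here.
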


\begin{proof} We will prove this claim by the sake of contradiction. Assume that there is $i_0 \in \{1,2,...,t\}$ such that $p_{i_0} > n$. By the definition of $A'$, $P_{i_0}$ cannot send a signal to all base vertices because there is another broadcast base vertex ($t>1$). Therefore, there is a branch that $P_{i_0}$ cannot send a signal to any vertex on the branch. Also, all vertices on the branch containing $P_{i_0}$ can receive the a signal from $P_{i_0}$. Consider neighborhood branches $A,B$, the number of vertices, which can receive a signal from $P_{i_0}$, of the branch $A$ differ at most one from those of the branch $B$. By the idea of the Intermediate Value Theorem, there is a branch $C$ such that only one vertex $T$ on the branch cannot receive a signal from $P_{i_0}$. It is obvious that $T$ is the leaf vertex on the branch $C$.

\begin{figure} [htbp]
\centering
\includegraphics[width=8cm]{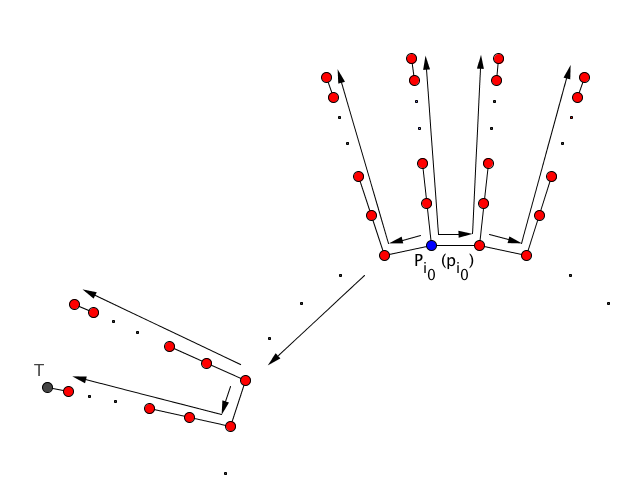}
\caption{The existence of the point $T$ after $P_{i_0}$ sends a signal with a strength $p_{i_0}$}
\end{figure}

Consider that the base vertex of the branch $C$ can receive a signal from $P_{i_0}$. Therefore, this branch is the branch where its vertices cannot send a signal to its base vertex. From the invariance $V$, $T$ cannot receive a signal from a base vertex, and vertices on the branch $C$ except $T$ cannot be broadcast vertices because they receive a signal from $P_{i_0}$. Also, those vertices still receive a signal only once due to the property of the efficient broadcast and the invariance $V$. Therefore, $T$ has to be a broadcast vertex. However, this broadcast vertex can send a signal to the neighbor vertex which interferes with the signal from $P_{i_0}$, which contradicts with the fact that this vertex still receive a signal only once, as desired. 
\end{proof}

Lengths of those disconnected paths in a clockwise order can form a tuple $(n,n-1,...,n-p_i+1,n-p_i,n-p_i+1,...,n-1,n)$. Therefore, from the standard pattern of ${\gamma}_{b}-$broadcast function, these disconnected paths and vertices receiving a signal from $P_i$ use costs at least   

\begin{center}
$p_i+\lceil \frac{n-p_i}{3} \rceil+2(\lceil\frac{n-p_i+1}{3}\rceil+\lceil\frac{n-p_i+2}{3}\rceil+...+\lceil\frac{n}{3}\rceil)$ 
\end{center}

Next, we will find the relationship among $p_1,p_2,...,p_t.$ Each vertex $P_i$ can send a signal to $2p_i+1$ vertices on the cycle, including itself. From the definition of $A'$, each vertex on the cycle can receive only once from another vertex on the cycle or itself. Therefore, 

\begin{equation}
(2p_1+1)+(2p_2+2)+...+(2p_k+1)=m
\end{equation}

Thus, we can conclude that

\begin{center}
$\gamma_b(S^n_m)=min\bigg($$\sum_{i=1}^k \Big(p_i+\lceil \frac{n-p_i}{3} \rceil+2(\lceil\frac{n-p_i+1}{3}\rceil+\lceil\frac{n-p_i+2}{3}\rceil+...+\lceil\frac{n}{3}\rceil)$$\Big)\bigg)$
\end{center}

Let $F(p_1,p_2,...,p_k)= \sum_{i=1}^k \Big(\lceil \frac{n-p_i}{3} \rceil+2(\lceil\frac{n-p_i+1}{3}\rceil+\lceil\frac{n-p_i+2}{3}\rceil+...+\lceil\frac{n}{3}\rceil)$\Big), for all positive number $a_1,a_2,...,a_k,k$ such that $(2p_1+1)+(2p_2+1)+...+(2p_k+1)=m$ 

Consider that $\gamma_b(S^n_m) = min (F(p_1,p_2,...,p_k) + (p_1+p_2+...+p_k)) = min(F(p_1,p_2,...,p_k)+ \frac{m-k}{2}).$ Therefore, it is sufficient to find the minimum value of function $F$

\begin{claim} If $p_1>p_2$, then $f(p_1,p_2,...,p_k)\leq f(p_1-1,p_2+1,p_3,...,p_k)$ \end{claim}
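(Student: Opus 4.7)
The plan is to recognize $F$ as separable, $F(p_1,\ldots,p_k)=\sum_{i=1}^{k} g(p_i)$, where
\[
g(p) := \lceil (n-p)/3 \rceil + 2\sum_{j=n-p+1}^{n} \lceil j/3 \rceil,
\]
and then to prove that $g$ is a discrete concave function of $p$. Once concavity is in hand, the claim reduces to a one-line two-variable rearrangement inequality: whenever $p_1>p_2$, moving a unit of mass from the larger coordinate $p_1$ down to the smaller coordinate $p_2$ cannot decrease $\sum_i g(p_i)$.

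First I would compute the forward difference $g(p) - g(p-1)$. Writing $S(p):=\sum_{j=n-p+1}^{n}\lceil j/3 \rceil$, the telescoping identity $S(p)-S(p-1)=\lceil (n-p+1)/3 \rceil$ gives
\[
g(p)-g(p-1) \;=\; \lceil (n-p)/3 \rceil \,+\, \lceil (n-p+1)/3 \rceil.
\]
Both ceilings on the right-hand side are weakly decreasing in $p$ (as $p$ grows, the arguments $n-p$ and $n-p+1$ shrink), hence the forward difference itself is weakly decreasing in $p$. This is exactly the discrete concavity statement: $g(a)-g(a-1)\le g(b+1)-g(b)$ whenever $a>b$.

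Applying the concavity inequality with $a=p_1$ and $b=p_2$ — valid since $p_1>p_2$ implies $p_1\ge p_2+1$ — and rearranging yields $g(p_1)+g(p_2)\le g(p_1-1)+g(p_2+1)$. Adding the untouched terms $g(p_3)+\cdots+g(p_k)$ to both sides gives $F(p_1,p_2,\ldots,p_k)\le F(p_1-1,p_2+1,p_3,\ldots,p_k)$, as required.

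The only subtle step is the ceiling bookkeeping in the telescoping for $S(p)$ and the verification that the forward difference is genuinely monotone; these are short once set up but must be written carefully to avoid off-by-one errors at the ceiling boundaries, as well as the degenerate case $p_2=0$ where $S(p_2)$ is the empty sum. No deeper tool is needed — pure arithmetic with $\lceil\,\cdot\,\rceil$ plus discrete concavity does all the work.
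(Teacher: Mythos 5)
Your proof is correct and is essentially the paper's argument: the paper also computes $F(p_1,p_2,\ldots)-F(p_1-1,p_2+1,\ldots)$ coordinate-by-coordinate, cancels the telescoping sums, and is left with exactly your four ceiling terms $\bigl(\lceil \frac{n-p_1}{3}\rceil-\lceil \frac{n-p_2-1}{3}\rceil\bigr)+\bigl(\lceil \frac{n-p_1+1}{3}\rceil-\lceil \frac{n-p_2}{3}\rceil\bigr)\leq 0$ via $p_1\geq p_2+1$. Your packaging of this cancellation as discrete concavity of the per-coordinate function $g$ is a cleaner way to organize the same computation, but it is not a different proof.
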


\begin{proof} Consider that $(2(p_1-1)+1)+(2(p_2+1)+1)+2p_3+...+2p_k=(2p_1+1)+(2p_2+1)+...+(2p_k+1)=m$, which means that $(p_1-1,p_2+1,p_3,...,p_k)$ still follows the condition $(1)$

\begin{align}
F(p_1,p_2,...)-F(p_1-1,p_2+1,...) & = \big(\lceil \frac{n-p_1}{3} \rceil +2\sum_{i=1}^{p_1} \lceil \frac{n-p_1+i}{3} \rceil + \lceil \frac{n-p_2}{3} \rceil\nonumber\\
& \quad +2\sum_{i=1}^{p_2} \lceil \frac{n-p_2+i}{3} \rceil\big)- \big(\lceil \frac{n-p_1+1}{3} \rceil\nonumber\\
& \quad +2\sum_{i=1}^{p_1-1} \lceil \frac{n-p_1+i+1}{3} \rceil + \lceil \frac{n-p_2-1}{3} \rceil\nonumber\\
& \quad +2\sum_{i=1}^{p_2+1} \lceil \frac{n-p_2+i-1}{3} \rceil\big) \nonumber\\
   & = \big(\lceil \frac{n-p_1}{3}\rceil-\lceil \frac{n-p_2-1}{3}\rceil \big)\nonumber\\
   & \quad +\big(\lceil \frac{n-p_1+1}{3}\rceil-\lceil \frac{n-p_2}{3}\rceil\big) \nonumber\\
   & \leq 0 \qquad(p_1\geq p_2+1) \nonumber
 \end{align}
   
Therefore, $f(p_1,p_2,...,p_k)\leq f(p_1-1,p_2+1,p_3,...,p_k)$ as desired.
\end{proof}
From \textbf{Claim 5},
\begin{equation}
f(1,p_1+p_2-1,p_3,...,p_n)\leq f(2,p_1+p_2-2,p_3,...,p_n) \leq f(p_1,p_2,p_3,...,p_n) \nonumber
\end{equation}

Thus, we can conclude that $f(1,p_1+p_2-1,p_3,...,p_n)\leq f(p_1,p_2,p_3,...,p_n)$ if $p_1\geq p_2$. By obvious mathematical induction, $f(p_1,p_2,p_3,...,p_n) \geq f(\underbrace{1,1,...1}_{k-1 \: 1's},\frac{m-3k+2}{2})$, which means that this function has a similar property with a convex function, the maximum value appearing at the extremal element in the domain.

Therefore, 
\begin{align}
\gamma_b(S^n_m) & = min (f(\underbrace{1,1,...1}_{k-1 \: 1's},\frac{m-3k+2}{2})+(p_1+p_2+...+p_k)) \nonumber\\
& = min (f(\underbrace{1,1,...1}_{k-1 \: 1's},\frac{m-3k+2}{2})+\frac{m-k}{2}) \nonumber
 \end{align}
 
Given that $G(k)=f(\underbrace{1,1,...1}_{k-1 \: 1's},\frac{m-3k+2}{2})+\frac{m-k}{2}$ for all natural number $k$.
\begin{align}
G(k)= (k-1)(\lceil\frac{n-1}{3}\rceil+2\lceil\frac{n}{3}\rceil)+\lceil\frac{n-x}{3}\rceil+2\sum_{i=1}^{x} \lceil \frac{n-x+i}{3} \rceil + \frac{m-k}{2} \nonumber
\end{align}
, where $x= \frac{m-3k+1}{2}$
\begin{claim} $G(k+2)\geq G(k)$ for each natural number $k$. \end{claim}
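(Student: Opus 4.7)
The plan is to compute the difference $G(k+2)-G(k)$ in closed form and show it is non-negative. Write $C = \lceil\frac{n-1}{3}\rceil + 2\lceil\frac{n}{3}\rceil$ for brevity. Passing from $k$ to $k+2$ changes the expression for $G$ in three places: the prefactor $(k-1)C$ gains $2C$, the trailing term $\frac{m-k}{2}$ loses $1$, and the quantity $x = \frac{m-3k+2}{2}$ drops to $x-3$. So everything reduces to understanding how
\[
S(x) \;:=\; \lceil\tfrac{n-x}{3}\rceil + 2\sum_{i=1}^{x}\lceil\tfrac{n-x+i}{3}\rceil
\]
changes when $x$ decreases by $3$.

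For this I would reindex the sum via $j = n-x+i$, rewriting $2\sum_{i=1}^{x}\lceil\frac{n-x+i}{3}\rceil = 2\sum_{j=n-x+1}^{n}\lceil\frac{j}{3}\rceil$, whose upper limit no longer depends on $x$. Subtracting the expressions for $S(x-3)$ and $S(x)$, the bulk of the sums cancel, leaving $\lceil\frac{n-x}{3}\rceil - \lceil\frac{n-x+3}{3}\rceil = -1$ together with the three extra terms $2\bigl(\lceil\frac{n-x+1}{3}\rceil+\lceil\frac{n-x+2}{3}\rceil+\lceil\frac{n-x+3}{3}\rceil\bigr)$. The key identity
\[
\lceil\tfrac{a+1}{3}\rceil+\lceil\tfrac{a+2}{3}\rceil+\lceil\tfrac{a+3}{3}\rceil \;=\; a+3
\]
(three consecutive integers cover all residues mod $3$) applied with $a=n-x$ evaluates this block to $2(n-x+3)$, so $S(x) - S(x-3) = 2n-2x+5$, and hence
\[
G(k+2) - G(k) \;=\; 2C - (2n - 2x + 5) - 1 \;=\; 2C + 2x - 2n - 6.
\]

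Finally I would verify non-negativity. A short case split on $n \bmod 3$ shows $C = n$ when $3\mid n$ and $C = n+1$ otherwise, so $C \geq n$ in every case. Moreover, for $G(k+2)$ to be defined at all, the final coordinate $\frac{m-3(k+2)+2}{2}$ of its defining tuple $(1,\ldots,1,\cdot)$ must be at least $1$, which is exactly the condition $x \geq 4$. Combining the two bounds yields $G(k+2) - G(k) \geq 2n + 8 - 2n - 6 = 2 > 0$, which is even stronger than the stated inequality. The only real obstacle in the whole argument is spotting and proving the three-consecutive-ceilings identity; everything else is bookkeeping of how the upper limit and the offset inside the sum defining $S(x)$ shift in lockstep by $3$.
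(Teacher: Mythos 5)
Your proof is correct, and it follows the same first step as the paper—expanding $G(k+2)-G(k)$, noting that the reindexed sums telescope down to the three boundary terms $\lceil\frac{n-x+1}{3}\rceil,\lceil\frac{n-x+2}{3}\rceil,\lceil\frac{n-x+3}{3}\rceil$ plus the shift of the leading ceiling—but it finishes differently. The paper pairs each of the leftover negative ceilings with a copy of $\lceil\frac{n-1}{3}\rceil$ or $\lceil\frac{n}{3}\rceil$ from $2C$ and uses monotonicity of the ceiling together with $x\geq 3$ to bound each pair, reducing everything to $\lceil\frac{n}{3}\rceil-\lceil\frac{n-3}{3}\rceil-1=0$. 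You instead evaluate the leftover block exactly via the identity $\lceil\frac{a+1}{3}\rceil+\lceil\frac{a+2}{3}\rceil+\lceil\frac{a+3}{3}\rceil=a+3$ and the case check $C\geq n$, obtaining the closed form $G(k+2)-G(k)=2C+2x-2n-6$. This buys you more: under the domain condition $x\geq 4$ (needed for the $(k+2)$-tuple to have a positive last coordinate) you get the strict bound $G(k+2)-G(k)\geq 2$, and even under the paper's weaker hypothesis $x\geq 3$ your formula still gives $2C-2n\geq 0$, so the claim survives either reading of the constraint. One cosmetic caveat: the paper's displayed $x=\frac{m-3k+1}{2}$ is inconsistent with its own tuple $(1,\dots,1,\frac{m-3k+2}{2})$; you adopt the latter, but since only the increment $x\mapsto x-3$ enters the difference computation, this does not affect the argument.
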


\begin{proof} Consider that 

\begin{align}
G(k+2)= (k+1)(\lceil\frac{n-1}{3}\rceil+2\lceil\frac{n}{3}\rceil)+\lceil\frac{n-x+3}{3}\rceil+2\sum_{i=1}^{x-3} \lceil \frac{n-x+i+3}{3} \rceil + \frac{m-k}{2}-1 \nonumber
\end{align}

Therefore,

\begin{align}
G(k+2)-G(k)& = 2(\lceil\frac{n-1}{3}\rceil+2\lceil\frac{n}{3}\rceil)-\lceil\frac{n-x}{3}\rceil-2\lceil \frac{n-x+1}{3} \rceil \nonumber\\
& \qquad - 2\lceil \frac{n-x+2}{3}\rceil-\lceil \frac{n-x+3}{3}\rceil-1 \nonumber\\
& = 2(\lceil\frac{n-1}{3}\rceil-\lceil \frac{n-x+2}{3}\rceil)+(\lceil\frac{n}{3}\rceil-\lceil\frac{n-x}{3}\rceil) \nonumber\\
& \qquad +2(\lceil\frac{n}{3}\rceil-\lceil \frac{n-x+1}{3} \rceil)+(\lceil\frac{n}{3}\rceil-\lceil \frac{n-x+3}{3}\rceil-1) \nonumber\\
& \geq \lceil\frac{n}{3}\rceil - \lceil\frac{n-x}{3}\rceil -1 \qquad (\text{That is because $x \geq 3$}) \nonumber
\end{align}

Because of $x \geq 3$, $G(k+2)-G(k) \geq \lceil\frac{n}{3}\rceil - \lceil\frac{n-3}{3}\rceil -1=0$ as desired.

\end{proof}

From \textbf{Claim 6},

\begin{align}
G(1)\leq G(3)\leq G(5)\leq ... \nonumber\\
G(2)\leq G(4)\leq G(6)\leq ... \nonumber
\end{align}

As a result, the minimum value of the function $G$ is either $G(1)$ or $G(2)$. $G(2)$ can be occurred if and only if $m$ is even and $m \geq 6$ (by  $x>0$)

\begin{align}
G(2) & = f(1,\frac{m}{2}-2) \nonumber\\
& = (\lceil\frac{n-1}{3}\rceil+2\lceil\frac{n}{3}\rceil)+\lceil\frac{n-x}{3}\rceil+2\sum_{i=1}^{x} \lceil \frac{n-x+i}{3} \rceil + \frac{m-2}{2} \nonumber\\
& \geq (\lceil\frac{n-1}{3}\rceil+2\lceil\frac{n}{3}\rceil) + 2\lceil\frac{n}{3}\rceil+ \frac{m-2}{2} \nonumber\\
& \geq (\lceil\frac{n-1}{3}\rceil+2\lceil\frac{n}{3}\rceil) + \frac{m}{2}+1 \nonumber\\
& > n+\frac{m}{2} \nonumber 
\end{align}

However, $rad(S_m^{n}) = n+ \frac{m}{2}$, which means that  $rad(S_m^{n})<G(2).$ Thus, $G(2)$ cannot be the value of $\gamma_b(S^n_m)$. Therefore, this case cannot be the $\gamma_b-$dominating broadcast function.

\textbf{Case 2} $t=1$

From \textbf{Case 1}, the minimum value of the function $G$ is $G(1)$, which is the condition of this case. There is only one broadcast base vertex, $P^*$, being able to send a signal to cover every base vertices.

Let the signal strength of $P^*$ be $rad(S_m^{n})-k$. If $k$ is more than $0$, there are $k$ vertices on the branch, $l^*$, opposite to $P^*$ that cannot receive the signal from $P^*$. Also, there are $k-1$ vertices on each branch which is neighbor with the branch  $l^*$. Therefore, The total cost of this broadcast function is

\begin{align}
rad(S^n_m)-k+\lceil \frac{k}{3} \rceil +\lceil \frac{k-1}{3} \rceil+\lceil \frac{k-1}{3} \rceil & \geq \gamma_b(S^n_m)-k+\lceil \frac{k+(k-1)+(k-1)}{3} \rceil \nonumber\\
& = rad(S^n_m)-k+\lceil \frac{3k-2}{3} \rceil \nonumber\\
& = rad(S^n_m)-k+k \nonumber\\
& = rad(S^n_m) \nonumber
\end{align}

Therefore, in the case of $k>0$, the $\gamma_b-$dominating function of $S_m^{n}$ cannot be less than the radius of the sunlet graph. In the case of $k=0$, the minimum cost of this function is also the radius of the sunlet graph. Finally, we can conclude that
\begin{align}
\gamma_b(S^n_m)=rad(S^n_m)=n+\lfloor \frac{m}{2}\rfloor \nonumber
\end{align} \end{proof}
\begin{figure} [htbp]
\centering
\includegraphics[width=6cm]{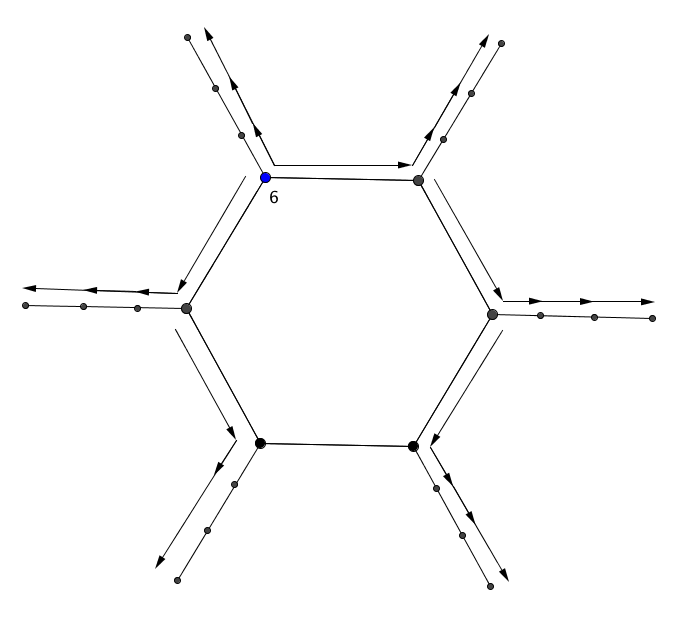}
\caption{The example of $\gamma_b-$ dominating broadcast function of $S^3_6$ ($m$ is an even number)}
\end{figure}

\begin{figure} [htbp]
\centering
\includegraphics[width=6cm]{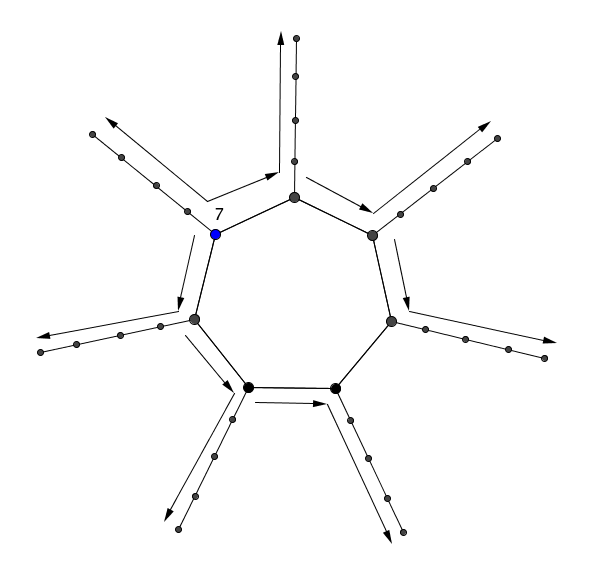}
\caption{The example of $\gamma_b-$ dominating broadcast function of $S^4_7$ ($m$ is an odd number)}
\end{figure}

\section{Discussion}
In this paper, the $\gamma_b-$dominating broadcast function values for the cycle graph and the $m-$Sunlet graph are evaluated. There is another variation of the cycle and the Sunlet, called \textbf{generalized Sunlet graph}. The \textbf{generalized Sunlet graph} is a cycle graph and straight paths extended from some vertices of a cycle graph. However, lengths of paths are not necessarily the same, which makes the proof more complicated. For our further work, we will evaluate the $\gamma_b-$dominating broadcast function values of the generalized Sunlet graph.

\begin{figure} [htbp]
\centering
\includegraphics[width=6cm]{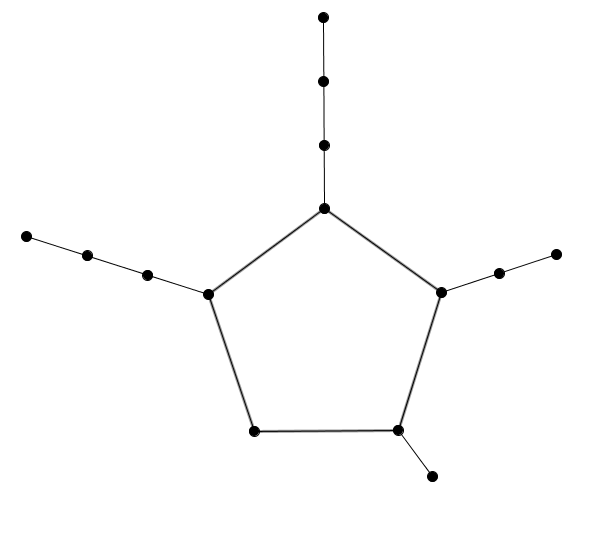}
\caption{The example of the generalized Sunlet graph}
\end{figure}
\bibliographystyle{plain}
\bibliography{bibliography}

\end{document}